\theoremstyle{plain}
\newtheorem{thm}{Theorem}[section]
\newtheorem{cor}[thm]{Corollary}
\newtheorem{lem}[thm]{Lemma}
\newtheorem{prop}[thm]{Proposition}
\theoremstyle{definition}
\newtheorem*{remark}{Remark}
\renewcommand{\P}{\mathbb{P}}
\newcommand{\E}{\mathbb{E}}
\newcommand{\R}{\mathbb{R}}
\newcommand{\Px}{P^{\xi}}
\newcommand{\Ex}{E^{\xi}}
\newcommand{\hs}{\hspace{2mm}}
\newcommand{\hsl}{\hspace{1mm}}
\newcommand{\1}{1\hspace{-0.098cm}\mathrm{l}}
\newcommand{\ind}{\1} %{\mathbbm{1}}
\newcommand{\eps}{\varepsilon}
\newcommand{\Z}{\mathbb{Z}}
\newcommand{\Prob}{\mathrm{Prob}}
\newcommand{\Om}{\Omega}
\newcommand{\p}{\P}
\newcommand{\be}[1]{\begin{equation}\label{#1}}
\newcommand{\ee}{\end{equation}}
\newcommand{\ra}{\rightarrow}
\newcommand{\bal}{\begin{aligned}}
\newcommand{\eal}{\end{aligned}}
\newcommand{\F}{\mathcal{F}}
\newcommand{\calG}{\mathcal{G}}
\newcommand{\smfr}[2]{\hbox{$\frac{#1}{#2}$}}
\author{Marcel Ortgiese and Matt Roberts}
\begin{document}

\begin{center}
{\Large \bf One-point localization for  branching random walk\\[1mm] in Pareto environment}\\[5mm]
\vspace{0.4cm}
\textsc{
Marcel Ortgiese\footnote{Institut f\"ur Mathematische Statistik,
Westf\"alische Wilhelms-Universit\"at M\"unster,
Einsteinstra\ss{}e 62,
48149 M\"unster,
Germany. } and 
Matthew I. Roberts\footnote{Department of Mathematical Sciences,
University of Bath,
Claverton Down,
Bath BA2 7AY,
United Kingdom.}
} 
\\[0.8cm]
{\small \today} 
\end{center}

\vspace{0.3cm}

\begin{abstract}\noindent 
We consider a branching random walk on the lattice, where the branching rates are given by an i.i.d.\ Pareto random potential. 
We show a very strong form of intermittency, where with high probability most of the mass
of the system 
is concentrated in a single site with high potential.
The analogous one-point localization is already known for the parabolic Anderson model, which describes the 
expected number of particles in the same system. In our case,  we rely on very fine estimates for the behaviour of particles 
near a good point.
This complements our earlier results that in the rescaled picture most of the mass is concentrated on a small island.
  \par\medskip

  \noindent\footnotesize
  \emph{2010 Mathematics Subject Classification}:
  Primary\, 60K37,  \ Secondary\, 60J80.%\\
  
\par\medskip
%\noindent{\slshape\bfseries Keywords.} 
\noindent\emph{Keywords.} Branching random walk, random environment, parabolic Anderson model, intermittency.
  \end{abstract}

\section{Introduction and main result}

\subsection{Introduction}

We consider a branching process in random environment defined on $\Z^d$. 
We start the system with a single particle at the origin, which 
can branch and also migrate in space.
Given the random potential $\xi = \{ \xi(z) \, : \, z  \in \Z^d\}$ of non-negative random variables,
 a particle splits into two particles at rate $\xi(z)$ when at site $z$. 
Furthermore, each particle moves independently according to the law of a nearest neighbour simple random walk in continuous time on $\Z^d$. This particular model was introduced in~\cite{GM90}.

Most of the analysis of this model has concentrated on the
expected number of particles. 
We fix a realization of the environment $\xi$ and denote the expected number of particles by
\[ u(z,t) = E^\xi [ \# \{ \mbox{particles at site } z \mbox{ at time } t  \} ] , \]
where the expectation $E^\xi$ is only over the branching and migration mechanisms
and $\xi$ is kept fixed.
Then $u(z,t)$ solves the 
stochastic partial differential equation, known as the \emph{parabolic Anderson model} (PAM),
\[ \begin{aligned} \partial_t u(z,t) &  = \Delta u(z,t) + \xi(z) u(z,t) , & \quad \mbox{for  }z \in \Z^d, t \geq 0, \\
u(z,0) & = \1_{\{z = 0 \}} &\quad \mbox{for } z \in \Z^d . 
\end{aligned}\]
Here, $\Delta$ is the discrete Laplacian defined for any function $f : \Z^d \ra \R$ as
\[ \Delta f(z) = \sum_{y \sim z} (f(y) - f(z)) , \quad z \in \Z^d, \]
where we write $y \sim z$ if $y$ is a neighbour of $z$ on the lattice $\Z^d$. 

The analysis of this model has a long history, which in the mathematics literature began with the work of~\cite{GM90}.
The central observation that has driven much of the research on the PAM
is that the system exhibits \emph{intermittency}: most of the mass is concentrated
on a small number of islands with high potential.
This effect is well understood for the PAM: see the surveys~\cite{GK05, M11, K16}.
In particular, the number and size of the relevant islands depends
on the decay of $\p ( \xi(0) > x)$ as $x \ra \infty$.
For bounded potentials the islands grow with $t$, and there
is an intermediate regime for double exponentially distributed potential 
with islands of finite size. Finally, for any potential with heavier tails, a single
island of a single point carries most of the mass.

We are particularly interested in the most extreme case, when 
the potential is Pareto distributed. 
In this case, the evolution of the PAM is particularly well understood, 
including asymptotics for the total masses, one point localization and a
scaling limit: 
see~\cite{HMS08, KLMS09, MOS11,OR14}.

In general much less known is about the branching system itself (without taking expectations). 
Some of the earlier results include~\cite{ABMY00} and~\cite{GKS13},
who look at the asymptotics of the expectation (with respect to $\xi$) of higher moments of the number of 
particles.

The starting point for this article is our recent 
result~\cite{OR14}.
We showed that---in the Pareto case---the hitting times of sites, the number of particles, and 
the support in a appropriately rescaled system are well described by a
process defined purely in terms of the 
environment $\xi$ (that is, given $\xi$, the process is deterministic),  which we call the lilypad process.

As one of the  applications of these results, we deduced that, 
under the right rescaling, the process is concentrated on a small island. 
However, without rescaling, the radius of the relevant
island at time $T$ is still roughly of order $T^{\frac{\alpha}{\alpha-d}}$, i.e.\ growing on a  superlinear scale.
In this paper, we address the question of whether these bounds can be improved
to a comparable scale to the results for the PAM. 
Indeed, our main result shows that the number of particles in the 
branching system is also localized in a single point.

\subsection{Main result}

We assume from now on that $\{ \xi(z),\, z \in\Z^d\}$
is a collection of independent and identically distributed Pareto random variables. 
To be precise, if we denote the underlying probability measure by $\Prob$, then we take
\[ \Prob ( \xi(z) > x ) = x^{- \alpha}  \quad \mbox{for all } x \geq 1 , \]
for a parameter $\alpha > 0$ and any $z \in \Z^d$. 
We will also assume that $\alpha > d$, which is known to be necessary 
for the total mass of
the PAM to remain finite (see \cite{GM90}).

For a fixed environment $\xi$, we denote by $P_y^\xi$  the law of the branching simple random walk in continuous time with binary branching and branching rates $\{\xi(z)\, , \, z \in \Z^d\}$ started with a single particle at site $y$.
Finally, for any measurable set $F \subset \Om$, we define
\[ \p_y ( F \times \cdot) = \int_F P_y^\xi ( \cdot) \,\Prob(d \xi) . \]
If we start with a single particle at the origin, we omit the subscript $y$ and simply write
$P^\xi$ and $\P$ instead of $P_0^\xi$ and $\p_0$.

We define
$Y(z,t)$ to be the set of particles at the point $z$ at time $t$, and let $Y(t)$  be the set of all particles present at time $t$.
We are interested in the number of particles
\[N(z,t) = \# Y(z,t)\quad\mbox{and}\quad N(t) = \# Y(t) = \sum_{z\in\Z^d} N(z,t). \]

Our main result states that the system is intermittent with 
one relevant island consisting of a single point.

\begin{thm}\label{oneptthm}
There exists a stochastic process $(w_t)_{t \ge 0}$ such
that 
\[ \frac{N(w_t,t)}{N(t)} \to  1\]
in probability under $\P$ as $t \ra \infty$.
\end{thm}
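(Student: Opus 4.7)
The candidate site $w_t$ should be the point identified by the lilypad process of \cite{OR14} as the current dominant site — roughly, the maximiser over a suitable box $B_t$ of a Feynman–Kac-type score of the form $\xi(z)(t - \tau(z))$ minus a travel cost, where $\tau(z)$ is the typical first-hitting time of $z$. Since $\xi$ is i.i.d.\ Pareto, classical extreme value theory gives a polynomially large gap between the score at $w_t$ and that of every competing candidate, and this separation is what drives the whole argument.

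For the lower bound on $N(w_t,t)$, I would show that with probability tending to one, the first particle to reach $w_t$ arrives at a time $\tau$ concentrated near its deterministic prediction, after which the subpopulation at $w_t$ behaves like a pure birth process of rate $\xi(w_t)$ minus leakage at constant rate. A standard branching-process large-deviation bound then yields
\[ \log N(w_t,t) \ge \xi(w_t)(t-\tau)(1-o(1)) \]
with high probability.

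For the upper bound on $\sum_{z\neq w_t} N(z,t)$, I would use the many-to-one lemma to bound $E^\xi[N(z,t)]$ for each candidate $z \in B_t$ in terms of the Feynman–Kac functional, then apply Markov's inequality together with a union bound over $z$. The gap from step one ensures that each such expectation is exponentially smaller than the lower bound on $N(w_t,t)$, so $\sum_{z\neq w_t} N(z,t)/N(w_t,t) \to 0$ in probability, which is equivalent to the claim of the theorem.

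The hardest part will be the sites $z$ \emph{close to} $w_t$: descendants of particles sitting at $w_t$ can walk one step and contribute to $N(z,t)$, so the relevant Feynman–Kac score at such $z$ is no longer controlled by $\xi(z)$ but by an excursion through $w_t$, and the naive gap argument fails. The bulk of the work is to show that of the roughly $e^{\xi(w_t)(t-\tau)}$ particles generated at $w_t$, only a vanishing fraction has left by time $t$: heuristically the branching rate $\xi(w_t)$ is polynomially large while the jump rate stays $O(1)$, and making this quantitative — together with tracking the descendants of the few particles that do leave — is what the abstract refers to as the "very fine estimates for the behaviour of particles near a good point."
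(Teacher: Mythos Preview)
Your overall architecture is right, and you have correctly located the real difficulty: sites $z$ adjacent to $w_t$, where the Feynman--Kac score is governed by excursions through $w_t$ rather than by $\xi(z)$. The far-away part of your plan (many-to-one, Markov, union bound, gap) is essentially the content of \cite{OR14} and is exactly what the paper imports to reduce to a small ball around $w_t$.

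The gap is in how you propose to close the near-$w_t$ estimate. Your lower bound on $N(w_t,t)$ uses only the \emph{first} particle to reach $w_t$, giving roughly $e^{\xi(w_t)(t-\tau)}$. But the population at neighbours of $w_t$ is fed by \emph{all} particles that have ever passed through $w_t$, not just descendants of the first arrival. If a large number of particles pour into $w_t$ shortly after time $\tau$ --- which you cannot rule out, since the immigration stream into $w_t$ is driven by the rest of the system and is genuinely random on the relevant scale --- then both $N(w_t,t)$ and $\sum_{z\sim w_t} N(z,t)$ are multiplied by the same large random factor, and your first-particle lower bound no longer dominates the many-to-one upper bound on the neighbours. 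In short, your numerator and denominator are computed against different random benchmarks.

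The paper's fix is to condition on the full record of arrivals at $w_t$: the set $L_\theta(w_t,t)$ of particles that hit $w_t$ for the first time (without having visited a competing high-potential site) together with their hitting times, generating a $\sigma$-algebra $\calG_{L_\theta(w_t,t)}$. Both $N(w_t,t)$ and $\sum_{z\neq w_t} N(z,t;U_{w_t,\theta},w_t)$ are then compared to the \emph{same} $\calG$-measurable benchmark
\[
\sum_{v\in L_\theta(w_t,t)} f_\theta\bigl(w_t,\,t-\tau_{w_t}(v)\bigr),
\]
where $f_\theta(y,s)=\Ex_y[N(y,s;U_{y,\theta})]$. The upper bound on neighbours (at most $\xi(w_t)^{-9/10}$ times the benchmark) is a direct Markov-inequality argument, close to your intuition. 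The lower bound on $N(w_t,t)$ (at least $\xi(w_t)^{-4/5}$ times the benchmark) is the substantial work: one partitions the arrivals into short time windows of length $1/\xi(w_t)$, uses a second-moment/Paley--Zygmund estimate plus a Chernoff bound in windows containing many arrivals, and separately shows that windows with few arrivals contribute only $O(\xi(w_t)^{1/2})$ times the first-particle term to the benchmark, so that the single-particle argument you sketched \emph{does} suffice for those. The point is that this decomposition is exactly what reconciles the first-particle lower bound with the all-arrivals upper bound; without it your ratio is not controlled.
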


This theorem says that at any large time, with probability close to 1, the overwhelming majority of the particles are situated at exactly one site. Recall that in \cite{OR14}, we showed that almost all the particles are contained in a ``small'' ball around one site. The bounds in \cite{OR14} told us approximately what time each site was hit, and approximately how many particles were at each site, but were not precise enough to prove Theorem~\ref{oneptthm}.

In fact, in proving Theorem \ref{oneptthm}, we will use the results in \cite{OR14} to reduce the number of sites that we need to look at; and since we used a rescaled picture in that article, we will again use a rescaling here, to prove a slightly different statement from Theorem \ref{oneptthm} that implies the statement above.

\begin{remark} \emph{Comparison with the PAM}. 
For the PAM, \cite{KLMS09} show that the analogous statement of Theorem~\ref{oneptthm}
holds for the expected number of particles. Moreover, 
they show that for any time $t$, almost surely the expected number of particles 
is \emph{almost surely} concentrated in at most two sites. 
For the branching system (without taking expectations), a statement about the almost sure behaviour is beyond the scope
of our current techniques. We hope to address this in future work. 
However, we can compare the maximizing site in our statement with the 
maximizing site in the PAM. Indeed, we will show that the maximizer is the maximizing site
of the lilypad model defined in~\cite{OR14}. Moreover, these two sites (the maximizing site in the lilypad model, and the maximizing site in the PAM) were already compared in~\cite[Thm.\ 1.5(ii)]{OR14}, where we showed that at any time, the two sites agree with probability bounded away from $0$ and $1$.

It is worth noting too that our methods could be used to give a relatively short proof that the solution of the PAM is localized in one point with high probability.
\end{remark}

In a companion paper~\cite{brwre_scaling}, we show that the rescaled  branching system converges to a Poissonian system, and as a corollary,  we deduce that the maximizing site has the \emph{ageing} property.

\subsection{The Many-to-One or Feynman-Kac formula}
We suppose that under $P^\xi_y$, we have a simple random walk $(X(t))_{t\geq0}$, started from $y$, independent of the environment and of the branching random walk above.

Given a particle $v \in Y(t)$, and $s \leq t$, we write $X_v(s)$ for the position of the unique ancestor of $v$ that was present at time $s$.

We recall the Feynman-Kac formula or, as we often refer to it, the many-to-one formula. This simple result is key to our analysis, and we will use it regularly.

\begin{lem}[Many-to-one / Feynman-Kac formula]\label{le:mto}
If $f$ is measurable, then $\Prob$-almost surely, for any $s>0$,
\[\Ex\bigg[\sum_{v\in Y(s)} f((X_v(u))_{u\in[0,s]})\bigg] = \Ex\bigg[\exp\left(\int_0^s \xi(X(u)) du\right) f((X(u))_{u\in[0,s]})\bigg].\]
\end{lem}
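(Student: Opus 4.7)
This is the standard many-to-one (Feynman-Kac) formula for a branching Markov process with position-dependent binary branching rates, and the plan is to reduce to cylindrical test functions and then induct on the number of time coordinates. A monotone class / Dynkin $\pi$-$\lambda$ argument first allows us to assume
\[
f\bigl((x(u))_{u\in[0,s]}\bigr) = \prod_{i=1}^k g_i(x(s_i))
\]
for some $0<s_1<\cdots<s_k=s$ and bounded measurable $g_i\colon\Z^d\to\R$. The hypothesis $\alpha>d$ ensures $E^\xi_y[N(s)]<\infty$ for $\Prob$-almost every $\xi$ (simultaneously for all rational $s$ by monotonicity in $s$), which makes both sides finite and legitimises the dominated convergence steps; this is also the source of the $\Prob$-a.s.\ qualifier.

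The base case of the induction is $k=1$ with $g_1\equiv 1$, where the claim reduces to
\[
E^\xi_y[N(s)] = E^\xi_y\bigg[\exp\bigg(\int_0^s \xi(X(u))\,du\bigg)\bigg]
\]
for every starting point $y$. Viewed as a function of $(y,s)$, the left-hand side solves the parabolic Anderson equation $\partial_s u = \Delta u + \xi u$ with $u(\cdot,0)\equiv 1$, by Kolmogorov's forward equation applied to the infinitesimal generator of the branching random walk (each particle jumps to a neighbour at rate $1$ and splits in two at rate $\xi(y)$ when at $y$). The right-hand side is the standard discrete Feynman-Kac representation of the same Cauchy problem, and uniqueness of bounded solutions on $\Z^d$ for fixed $\xi$ forces equality. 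Multiplying by an arbitrary bounded $g_1$ and using linearity then handles all of $k=1$.

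For the inductive step I would use the branching Markov property at time $s_1$: each $v'\in Y(s_1)$ seeds an independent copy of the branching random walk, so the sum over $v\in Y(s)$ on the left-hand side factorises as
\[
\sum_{v'\in Y(s_1)} g_1(X_{v'}(s_1))\,\varphi(X_{v'}(s_1)), \qquad \varphi(z) := E^\xi_z\bigg[\sum_{v\in Y(s-s_1)}\prod_{i=2}^k g_i(X_v(s_i-s_1))\bigg].
\]
Applying the inductive hypothesis to $\varphi$ and the base case to the resulting outer expectation, and then using the simple Markov property of $X$ at $s_1$ on the target right-hand side, the two sides agree. The main obstacle is the base case, specifically justifying the generator calculation and uniqueness for the discrete PAM when $\xi$ is unbounded; this can be handled by a routine truncation of $\xi$ at level $M$ (so the semigroup has bounded generator and everything is elementary) followed by monotone convergence as $M\to\infty$, which is legitimate thanks to the $\Prob$-a.s.\ finiteness secured above.
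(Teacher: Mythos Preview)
The paper does not prove this lemma: it is stated as the standard Feynman--Kac / many-to-one identity and is invoked throughout without justification, so there is no ``paper's proof'' to compare against. Your sketch is a correct route to a proof. One small tightening: the phrase ``multiplying by an arbitrary bounded $g_1$ and using linearity'' is a little loose, since a literal multiplication does not work when $g_1$ is position-dependent. What you need for the general $k=1$ case is to run the same PDE/uniqueness argument with initial data $g_1$ in place of the constant $1$: both $y\mapsto E^\xi_y\big[\sum_{v\in Y(s)} g_1(X_v(s))\big]$ and $y\mapsto E^\xi_y\big[e^{\int_0^s \xi(X(u))\,du}g_1(X(s))\big]$ solve $\partial_s u=\Delta u+\xi u$ with $u(\cdot,0)=g_1$, by the same generator computation. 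Your truncation-and-monotone-limit device is the right way to sidestep the uniqueness issue for unbounded $\xi$; on the branching side the monotone limit is justified by the obvious coupling in which raising the branching rate can only increase particle counts.
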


\subsection{Outline of the proof and layout of the paper}\label{sec:heur}

We now explain why Theorem \ref{oneptthm} holds. We already know that almost all the particles are in a small ball around the ``good site'' $w$, so really we only need to consider the behaviour of particles within that ball. Imagine, for a moment, that we begin with one particle at $w$. The site $w$ has very large potential, on a much larger scale than the sites around it, so to simplify the picture we imagine particles breeding only when at $w$. Then the Feynman-Kac formula tells us that, under this simplification, the expected population size at $z$ at time $t$ is
\[\Ex_w[e^{\int_0^t \xi(w)\ind_{\{X(s)=w\}}ds}\ind_{\{X(t)=z\}}].\]

Taking $z=w$ we see that the expected population size at $w$ grows at least like $e^{\xi(w)t - 2dt}$, where the $e^{-2dt}$ factor corresponds to the probability that the random walk remains at $w$ up to time $t$. We state this precisely in Lemma \ref{tr}. In fact, standard methods can be used to show that in fact the population size at $w$ (not just the \emph{expected} population size) grows at least like $e^{\xi(w)t - 2dt}$, up to some small error.

Now let $z$ instead be a neighbour of $w$. To be at $z$ at time $t$, the random walk $(X(s))_{s\le t}$ must spend some time away from $w$. Spending time $s$ away from $w$ costs $e^{-\xi(w)s}$, which suggests we should make $s$ small; but jumping within a time interval of length $s$ costs at least $s$. A simple optimization calculation suggests that we should choose $s$ roughly of the order $1/\xi(w)$. We therefore guess that the size of the population at any neighbour of $w$ should be at most $1/\xi(w)$ times as large as the size of the population at $w$ itself. Similarly, at distance 2 from $w$ we expect to see about $1/\xi(w)^2$ times as many particles as at $w$, and so on. Since $\xi(w)\gg 1$, and there are at most $(2d)^k$ sites at distance $k$ from $w$, the population at $w$ is much larger than the total population elsewhere. This statement will be made precise in Lemma \ref{ybest}.

This is the basic idea behind the result, but unfortunately there are technical difficulties. The main problem we face is that we do not only have to consider starting with one particle at $w$. We have particles arriving at $w$ from other sites at unpredictable times: it may be that the first particle to hit $w$ was ahead of its time, and there is a wait before more particles pour in; or it may be that immediately after the first particle hits $w$, a huge number more arrive hot on its tail. Besides, with huge numbers of particles to consider, \emph{some} particles will show unusual behaviour. For example, some might visit $w$ only for a very brief period, and end up with more descendants at a neighbouring site than at $w$.

We take each particle to arrive at $w$ (whose ancestors have not already visited $w$) in turn. The argument above is enough to show that the expected number --- and therefore, by Markov's inequality, the actual number --- of descendants at sites other than $w$ is much smaller than the expected number at $w$. This is Proposition \ref{notyprop}. The challenge then is to show that the number of particles at $w$ is as large as it should be. We do this by first showing that each particle arriving at $w$ has, with fairly large probability, a reasonable number of descendants at $w$ at time $t$ (not too much smaller than the expected number). This is done in Lemmas \ref{pz} and \ref{onev}. Say that such particles ``behave well''.

We then break the time interval $[0,t]$ up into small chunks. Provided that a lot of particles arrive at $w$ within a chunk, the probability that at least half behave well is extremely close to 1. And if at least half behave well, then we have a good number of descendants at $w$ at time $t$. This is carried out in Lemma \ref{jbulk}. There are some further technicalities that are taken care of in Lemmas \ref{tsmall} and \ref{smalllem}, but this is essentially enough to show that the population at $w$ is as large as it should be (Proposition \ref{bigenough}).

Combining Propositions \ref{notyprop} and \ref{bigenough} does most of the work in proving Theorem \ref{oneptthm}. However, just as in \cite{OR14}, we have to check that various events of small probability do not occur. This is done in Section \ref{mainproofsec}.

\subsection{Some definitions}

In order to apply the heuristic above, we will need to ensure that particles cannot visit points of large potential other than $w$. This motivates the following definitions. We now work with general points $y$ and $z$, but it may be helpful to imagine $y$ as the good point $w$, and $z$ as a point nearby.

For $U\subset \Z^d$, $y,z\in\Z^d\setminus U$ and $t\geq 0$, define
\[Y(z,t;U) = \{v\in Y(z,t) : \not\exists s\leq t \hbox{ with } X_v(s)\in U\}\]
and
\[Y(z,t;U,y) = \{v\in Y(z,t) : \exists s\leq t \hbox{ with } X_v(s)=y, \hbox{ but} \not\exists s\leq t \hbox{ with } X_v(s)\in U\},\]
and let $N(z,t;U) = \# Y(z,t;U)$ and $N(z,t;U,y) = \# Y(z,t;U,y)$. For $v\in\bigcup_{s\leq t} Y(s)$, let $Y^v(z,t;U) = \{w\in Y(z,t;U) : v\leq w\}$, the set of descendants of $v$ that are in $Y(z,t;U)$, and $N^v(z,t;U) = \# Y^v(z,t;U)$; similarly for $Y^v(z,t;U,y)$ and $N^v(z,t;U,y)$. Also define the event
\[A(z,t;U) = \{X(t) = z, \hs \not\exists s\leq t \hbox{ with } X(s)\in U\}.\]
Then for $\theta\geq0$, define
\[U_{y,\theta} = \{z\in \Z^d : \xi(z)> \xi(y)-\theta\}\setminus\{y\}\]
and
\[f_\theta(y,t) = \Ex_y[N(y,t;U_{\theta,y})].\]

One complication is that there are always particles arriving at $w$ from elsewhere, and as soon as a new particle arrives it begins contributing to the population at $w$. Controlling this precisely is difficult, and if for example a large number of particles arrive at $w$ just after $w$ is first hit, then the population at later times will be much larger than if just one particle hits $w$ significantly before any others. (Even very small fluctuations can be significant when the potential at $w$ is so large.) Instead of tackling this issue, we instead work on estimating the population at and near $w$ given the information about particles arriving at $w$ for the first time. Again we make our definitions for a general site $y$.

Let $H(y) = \inf\{t>0 : N(y,t) \ge 1\}$, the first hitting time of $y$. For $v\in \bigcup_{t\geq0} Y(t)$ and $y\in\Z^d$, let $\tau_y(v) = \inf\{s\geq0 : X_v(s) = y\}$, the first time that the particle $v$ hits the point $y$. Define
\[L_\theta(y,t) = \bigg\{v\in \bigcup_{s\leq t} Y(y,s) \bigg|  \not\exists w<v  : \, w\in \bigcup_{s\leq t} Y(y,s)
\ \mbox{ and}\ \not\exists s \leq \tau_y(v) : \, X_v(s) \in U_{y,\theta}\bigg\}.\]
That is, $L_\theta(y,t)$ contains only those particles that hit $y$ before $t$, but whose ancestors have not hit $y$ or visited $U_{y,\theta}$. Let $\calG_{L_\theta(y,t)}$ be the $\sigma$-algebra that contains information about which particles are in $L_\theta(y,t)$ as well as the times that they hit $y$,
\[\calG_{L_\theta(y,t)} = \sigma(L_\theta(y,t), \{\tau_y(v) : v\in L_\theta(y,t)\}).\]

If $y$ has much larger potential than any nearby point, then the number of particles that we anticipate seeing at $y$ at time $t$ is (for any $\theta\ll \xi(y)$)
\[\sum_{v\in L_\theta(y,t)}f_\theta(y,t-\tau_y(v)),\]
and by the heuristic in Section \ref{sec:heur}, we anticipate seeing at most a constant times
\[\frac{1}{\xi(y)}\sum_{v\in L_\theta(y,t)}f_\theta(y,t-\tau_y(v))\]
particles elsewhere. The following two sections make these statements precise, leaving a small amount of room for inaccuracies. The first, Section \ref{notysec}, shows that the population away from $y$ is unlikely to be bigger than $\xi(y)^{-9/10}\sum_{v\in L_\theta(y,t)}f_\theta(y,t-\tau_y(v))$, and the second, Section \ref{ysec}, shows that the population at $y$ is unlikely to be smaller than $\xi(y)^{-4/5}\sum_{v\in L_\theta(y,t)}f_\theta(y,t-\tau_y(v))$. By the argument above, these are perfectly reasonable statements, and they of course imply that the population at $y$ is bigger than the population away from $y$.

Throughout this paper, $|\cdot |$ will denote the $L^1$-norm on $\R^d$ and $B(x,r)$ the open 
ball around $x$ of radius $r$ in $L^1$. 

\section{The population away from $y$ is not too big}\label{notysec}

Our main result in this section is the following.

\begin{prop}\label{notyprop}
Take $y\in\Z^d$ with $\xi(y)\geq 2$ and suppose that $\theta > 10d\xi(y)^{19/20}$. Then for any $t>0$,
\[\Px\bigg(\sum_{v\in L_\theta(y,t)}\sum_{z\neq y} N^v(z,t;U_{y,\theta}) \geq \sum_{v\in L_\theta(y,t)}\frac{f_\theta(y,t-\tau_y(v))}{\xi(y)^{9/10}} \hsl\bigg|\hsl\calG_{L_\theta(y,t)}\bigg) \leq \frac{2d\xi(y)^{-1/20}}{(1-2^{-19/20})^2}\]
almost surely.
\end{prop}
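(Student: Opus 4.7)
The strategy is a two-step Markov reduction followed by an excursion-comparison at $y$.

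\emph{Step 1: reduction to a single-particle estimate.} By the branching property, conditional on $\calG_{L_\theta(y,t)}$ the subtrees rooted at the distinct particles $v\in L_\theta(y,t)$ evolve, from time $\tau_y(v)$ onward, as independent branching random walks started from one particle at $y$. Since the ancestors of each such $v$ avoid $U_{y,\theta}$ by the definition of $L_\theta$, descendants contributing to $N^v(\cdot,t;U_{y,\theta})$ are exactly those whose paths from $\tau_y(v)$ onward avoid $U_{y,\theta}$, and so
\[
\Ex\bigg[\sum_{v\in L_\theta(y,t)}\sum_{z\neq y}N^v(z,t;U_{y,\theta})\,\bigg|\,\calG_{L_\theta(y,t)}\bigg]=\sum_{v\in L_\theta(y,t)}\sum_{z\neq y}\Ex_y\big[N(z,t-\tau_y(v);U_{y,\theta})\big].
\]
Markov's inequality applied conditionally on $\calG_{L_\theta(y,t)}$ then reduces the proposition to the single-particle estimate
\[
\sum_{z\neq y}\Ex_y\big[N(z,s;U_{y,\theta})\big]\;\le\;\frac{2d\,\xi(y)^{-19/20}}{(1-2^{-19/20})^2}\,f_\theta(y,s)\qquad\text{for every }s>0,
\]
since applying this termwise in $v$ and dividing through by $\xi(y)^{-9/10}\sum_v f_\theta(y,t-\tau_y(v))$ gives exactly $2d\,\xi(y)^{-1/20}/(1-2^{-19/20})^2$.

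\emph{Step 2: Feynman-Kac rewriting.} By the many-to-one formula (Lemma \ref{le:mto}), both sides of the single-particle estimate become expectations under the law of one SRW $X$ started at $y$: the right-hand side equals $\Ex_y\big[e^{\int_0^s\xi(X(u))du}\ind_{A(y,s;U_{y,\theta})}\big]$ and the left-hand side is the same expression with $\{X(s)=y\}$ replaced by $\{X(s)\neq y\}$. Along paths avoiding $U_{y,\theta}$, the potential $\xi(X(u))$ equals $\xi(y)$ on $\{X(u)=y\}$ and is at most $\xi(y)-\theta$ otherwise, so with $T_y(s)=\int_0^s\ind_{X(u)=y}du$ one has
\[
\int_0^s\xi(X(u))du\;\le\;(\xi(y)-\theta)s+\theta T_y(s),
\]
whereas $f_\theta(y,s)\ge e^{(\xi(y)-2d)s}$ from the contribution of the trivial path that stays at $y$ throughout $[0,s]$. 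Hence each unit of time spent off $y$ costs at least a factor $e^{-(\theta-2d)}$ relative to time at $y$, and since $\theta\ge 10d\,\xi(y)^{19/20}$ this penalty is enormous.

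\emph{Step 3: excursion comparison and main obstacle.} The SRW from $y$ alternates i.i.d.\ $\mathrm{Exp}(2d)$ sojourns at $y$ with excursions into $\Z^d\setminus\{y\}$; by the strong Markov property at each jump in and out of $y$, both Feynman-Kac expectations of Step 2 expand into renewal-type sums indexed by the number of completed excursions before time $s$ (and, on $\{X(s)\neq y\}$, a final unfinished excursion). The sojourn-at-$y$ weights factor identically on both sides, so the comparison reduces to a per-excursion weight ratio bounded above by
\[
\int_0^\infty 2d\,e^{-(\theta-2d)\ell}\,d\ell\;=\;\frac{2d}{\theta-2d},
\]
where $2d$ is the departure rate from $y$ and $e^{-(\theta-2d)\ell}$ captures the penalty for spending time $\ell$ off $y$ rather than at $y$. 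Under $\theta>10d\xi(y)^{19/20}$ and $\xi(y)\ge 2$ an elementary check bounds this by $2^{-19/20}\xi(y)^{-19/20}$. Summing two geometric series---one over the number of completed excursions before time $s$, and one from the final unfinished excursion on $\{X(s)\neq y\}$---produces the factor $(1-2^{-19/20})^{-2}$, while the leading $2d\,\xi(y)^{-19/20}$ survives from that final unfinished-excursion contribution. The main obstacle is organising this excursion decomposition cleanly enough to extract both the precise prefactor $2d\,\xi(y)^{-19/20}$ and the exact constant $(1-2^{-19/20})^{-2}$ without double-counting paths across orderings of excursions; once that is done, Steps 1 and 2 are routine.
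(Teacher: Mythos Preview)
Your Steps 1 and 2 are correct and coincide with the paper's argument: conditional Markov's inequality reduces the proposition to showing
\[
\sum_{z\neq y}\Ex_y[N(z,s;U_{y,\theta})]\le \frac{2d\,\xi(y)^{-19/20}}{(1-2^{-19/20})^2}\,f_\theta(y,s)\quad\text{uniformly in }s>0,
\]
and many-to-one plus time reversal recasts the left side as $\sum_{z\neq y}\Ex_z\big[e^{\int_0^s\xi(X(u))du}\ind_{A(y,s;U_{y,\theta})}\big]$.

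Step 3 diverges from the paper. The paper's argument is \emph{spatial}, not temporal: Lemma~\ref{ybest}, proved by a first-jump decomposition combined with Lemma~\ref{tr}, shows that with $\eta=1/(8d\xi(y)^{19/20})$ one has
\[
\sup_{z\in D_k(y)}\Ex_z\big[e^{\int_0^s\xi(X(u))du}\ind_{A(y,s;U_{y,\theta})}\big]\le \xi(y)^{-19/20}\sup_{z\in D_{k-1}(y)}\Ex_z\big[e^{\int_0^s\xi(X(u))du}\ind_{A(y,s;U_{y,\theta})}\big]
\]
for every $k\ge 1$; iterating and then summing over the at most $2kd$ points $z$ with $|z-y|=k$ gives $\sum_{k\ge 1}2kd\,\xi(y)^{-19k/20}$. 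Thus the factor $(1-2^{-19/20})^{-2}$ in the statement comes from $\sum_k k r^k$ over \emph{distances} $k$, not over excursion counts.

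Your excursion idea is viable, but the execution in Step 3 is muddled. The ``two geometric series'' are a red herring: completed excursions before time $s$ contribute identically to both $f_\theta(y,s)$ and $\sum_{z\ne y}\Ex_y[N(z,s;U_{y,\theta})]$ and cancel in the ratio, so there is nothing to sum there. The clean version of your argument conditions on $\rho$, the last departure time from $y$ before $s$: by strong Markov and the bound $\xi\le\xi(y)-\theta$ off $\{y\}\cup U_{y,\theta}$,
\[
\sum_{z\ne y}\Ex_y[N(z,s;U_{y,\theta})]\le\int_0^s f_\theta(y,\rho)\cdot 2d\cdot e^{(\xi(y)-\theta)(s-\rho)}\,d\rho,
\]
and then Lemma~\ref{tr} gives $f_\theta(y,\rho)\le e^{-(\xi(y)-2d)(s-\rho)}f_\theta(y,s)$, so the ratio is at most $\int_0^\infty 2d\,e^{-(\theta-2d)\ell}d\ell= 2d/(\theta-2d)\le\tfrac14\xi(y)^{-19/20}$. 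That is your own integral from Step~3, with nothing further needed --- one integral, no geometric series, and a sharper constant than the statement. What the paper's spatial induction buys in exchange is the pointwise decay $\Ex_y[N(z,s;U_{y,\theta})]\le\xi(y)^{-19|z-y|/20}f_\theta(y,s)$ in $|z-y|$, which the temporal argument does not see.
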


The precise value on the right-hand side is not important; we only need that it is small when $\xi(y)$ is large.

To prove Proposition \ref{notyprop}, we will need two fairly straightforward lemmas, which will also be useful later. The first essentially says that the population at $y$ grows at least like $e^{\xi(y)t-2dt}$.

\begin{lem}\label{tr}
For any $y,z\in \Z^d$, $\theta>0$, and $0<s<t$,
\[\Ex_{z}\big[e^{\int_0^{s} \xi(X(u))du} \ind_{A(y,s;U_{y,\theta})}\big]\leq e^{-(\xi(y) - 2d)(t-s)}\Ex_{z}\big[e^{\int_0^{t} \xi(X(u))du} \ind_{A(y,t;U_{y,\theta})}\big].\]
\end{lem}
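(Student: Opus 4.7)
The plan is to prove the equivalent rewritten inequality
\[
e^{(\xi(y)-2d)(t-s)}\,\Ex_{z}\big[e^{\int_0^{s} \xi(X(u))du} \ind_{A(y,s;U_{y,\theta})}\big] \leq \Ex_{z}\big[e^{\int_0^{t} \xi(X(u))du} \ind_{A(y,t;U_{y,\theta})}\big]
\]
by exhibiting, on the right-hand side, a sub-event whose contribution already matches the left-hand side up to the desired exponential factor. The natural choice is to insist that the random walk, having reached $y$ at time $s$ without entering $U_{y,\theta}$, simply sits at $y$ throughout $[s,t]$.

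Concretely, first note that since $y \notin U_{y,\theta}$ by definition, the event
\[
A(y,s;U_{y,\theta}) \cap \{X(u)=y \text{ for all } u\in[s,t]\}
\]
is contained in $A(y,t;U_{y,\theta})$: the walk is at $y$ at time $t$, and it avoids $U_{y,\theta}$ on $[0,s]$ (by $A(y,s;\cdot)$) and on $[s,t]$ (since it sits at $y$). So the right-hand side is bounded below by
\[
\Ex_z\Big[e^{\int_0^s \xi(X(u))du}\ind_{A(y,s;U_{y,\theta})}\,e^{\int_s^t \xi(X(u))du}\ind_{\{X(u)=y\,\forall u\in[s,t]\}}\Big].
\]
On the event inside this expectation the walk is at $y$ throughout $[s,t]$, so $\int_s^t \xi(X(u))du = \xi(y)(t-s)$, giving a factor $e^{\xi(y)(t-s)}$ which pulls out of the expectation.

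Next, since $A(y,s;U_{y,\theta}) \subseteq \{X(s)=y\}$, an application of the Markov property of $X$ at time $s$ factorises the remaining indicator:
\[
\Ex_z\big[\ind_{A(y,s;U_{y,\theta})}\,e^{\int_0^s \xi(X(u))du}\,\ind_{\{X(u)=y\,\forall u\in[s,t]\}}\big]
= \Ex_z\big[\ind_{A(y,s;U_{y,\theta})}\,e^{\int_0^s \xi(X(u))du}\big]\,P_y\big(X(u)=y\,\forall u\in[0,t-s]\big).
\]
The simple random walk $X$ leaves any site at total rate $2d$, so under $P_y$ the first jump time is exponential with parameter $2d$; hence $P_y(X(u)=y\ \forall u\in[0,t-s]) = e^{-2d(t-s)}$.

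Combining the two factors gives $e^{\xi(y)(t-s)}e^{-2d(t-s)} = e^{(\xi(y)-2d)(t-s)}$ multiplying the time-$s$ expectation, which is exactly the inequality we wanted. There is no real obstacle here — the only point to be careful about is that $y \notin U_{y,\theta}$, so that forcing the walk to sit at $y$ on $[s,t]$ is compatible with the avoidance constraint defining $A(y,t;U_{y,\theta})$.
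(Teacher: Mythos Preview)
Your proof is correct and uses the same core idea as the paper: force the walk to sit at $y$ for the extra time $t-s$, collect the factor $e^{\xi(y)(t-s)}$ from the potential and $e^{-2d(t-s)}$ from the holding probability, and apply the Markov property. The paper first applies time reversal (swapping the roles of $y$ and $z$), then forces the walk to stay at $y$ on the \emph{initial} interval $[0,t-s]$, applies the Markov property there, and reverses time back; your version works directly on the terminal interval $[s,t]$ without any reversal, which is slightly cleaner since the time reversal is not actually needed for this lemma.
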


The second lemma says that if we start with one particle at $z$, the expected population at $y$ decreases as $z$ moves away from $y$. By reversing time (which is how we will apply this lemma later) we can think of starting with one particle at $y$, and the expected population decreasing as we look further away from $y$.

\begin{lem}\label{ybest}
For $y\in\Z^d$, let $D_k(y) = \{z\in \Z^d : |z-y|=k\}$. Suppose $\eta\in(0,1/(8d))$ and $\theta > 2d + (\log 2)/\eta$. Then for any $y\in \Z^d$, $k\geq1$ and $s>\eta$,
\[\sup_{z\in D_k(y)}\Ex_{z}\big[e^{\int_0^{s} \xi(X(u))du} \ind_{A(y,s;U_{y,\theta})}\big] < 8d\eta \sup_{z\in D_{k-1}(y)}\Ex_{z}\big[e^{\int_0^{s} \xi(X(u))du} \ind_{A(y,s;U_{y,\theta})}\big].\]
In particular,
\[\sup_{z\in \Z^d}\Ex_{z}\big[e^{\int_0^{s} \xi(X(u))du} \ind_{A(y,s;U_{y,\theta})}\big] = \Ex_{y}\big[e^{\int_0^{s} \xi(X(u))du} \ind_{A(y,s;U_{y,\theta})}\big]=f_\theta(y,s).\]
\end{lem}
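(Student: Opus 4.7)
Write $g(z,s) := \Ex_z\bigl[\exp(\int_0^s \xi(X(u))\,du)\,\ind_{A(y,s;U_{y,\theta})}\bigr]$ and $M_j := \sup_{z \in D_j(y)} g(z,s)$. An immediate application of the many-to-one formula (Lemma \ref{le:mto}) to the indicator of $A(y,s;U_{y,\theta})$ gives $f_\theta(y,s) = g(y,s) = M_0$. Once the main inequality $M_k < 8d\eta\, M_{k-1}$ is established for every $k \geq 1$, iterating with $8d\eta < 1$ yields both the displayed inequality of the lemma and the ``in particular'' statement $\sup_{z \in \Z^d} g(z,s) = g(y,s) = f_\theta(y,s)$.

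To prove $M_k < 8d\eta\, M_{k-1}$, fix $z \in D_k(y)$ with $k \geq 1$ and introduce the stopping time $\tau := \inf\{u \geq 0 : X(u) \in D_{k-1}(y)\}$. Since the walk is nearest-neighbour, $\tau > 0$, and on $A(y,s;U_{y,\theta})$ the endpoint $X(s)=y$ forces $\tau < s$. The strong Markov property at $\tau$ then gives
\[g(z,s) \;=\; \Ex_z\!\left[\ind_{\{\tau<s\}}\,e^{\int_0^\tau \xi(X(u))\,du}\,\ind_{\{X(u)\notin U_{y,\theta}\ \forall\, u \leq \tau\}}\, g(X(\tau),s-\tau)\right].\]
Throughout $[0,\tau]$ the walk lies at $L^1$-distance at least $k \geq 1$ from $y$ and outside $U_{y,\theta}$, so $\xi(X(u)) \leq \xi(y)-\theta$ and hence $\int_0^\tau \xi(X(u))\,du \leq (\xi(y)-\theta)\tau$. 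Since $X(\tau) \in D_{k-1}(y)$, Lemma \ref{tr} applied with time parameters $s-\tau$ and $s$ yields $g(X(\tau),s-\tau) \leq e^{-(\xi(y)-2d)\tau}\, g(X(\tau),s) \leq e^{-(\xi(y)-2d)\tau}\, M_{k-1}$.

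Substituting both bounds, the $\xi(y)\tau$ terms in the two exponents cancel and
\[g(z,s) \;\leq\; M_{k-1}\,\Ex_z\!\left[\ind_{\{\tau<s\}}\,e^{(2d-\theta)\tau}\right] \;\leq\; M_{k-1}\,\Ex\!\left[e^{(2d-\theta)\sigma_1}\right] \;=\; \frac{2d}{\theta}\, M_{k-1},\]
where $\sigma_1 \sim \mathrm{Exp}(2d)$ is the first jump time of $X$ (so $\tau \geq \sigma_1$), and the final equality uses $\theta > 2d$ for convergence of the integral. The hypothesis $\theta > 2d + (\log 2)/\eta$ then gives $2d/\theta < 2d\eta/\log 2 < 4d\eta < 8d\eta$, which closes the argument. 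The delicate point is really a bookkeeping one: the Feynman-Kac gain $(\xi(y)-\theta)\tau$ (available because $\xi$ is bounded by $\xi(y)-\theta$ off $U_{y,\theta}\cup\{y\}$) must exactly cancel the $-(\xi(y)-2d)\tau$ loss from Lemma \ref{tr}, and it is precisely this cancellation that uses both $k \geq 1$ (so $X(u) \neq y$ before $\tau$) and $\theta > 2d$ (so that the first-jump integrand is integrable).
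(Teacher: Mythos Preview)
Your proof is correct and takes a genuinely different route from the paper's. The paper works with a \emph{fixed} time $\eta$: it splits on whether the first jump $J$ occurs before or after $\eta$, applies the (ordinary) Markov property at time $\eta$, and obtains a recursion over the half-spaces $B_k=\{z:|z-y|\ge k\}$ with factor $\frac{2d\eta\,e^{2d\eta}}{1-e^{(2d-\theta)\eta}}$, which is then checked to be $<8d\eta$ under the hypotheses. You instead apply the \emph{strong} Markov property at the random hitting time $\tau$ of $D_{k-1}(y)$ and use the bound $\xi\le\xi(y)-\theta$ on the entire interval $[0,\tau)$, so that after the cancellation with Lemma~\ref{tr} only $e^{(2d-\theta)\tau}$ survives; dominating $\tau$ by the first jump time $\sigma_1$ gives the Laplace transform $2d/\theta$ directly. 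This is both cleaner and sharper: your constant $2d/\theta$ is strictly smaller than the paper's $8d\eta$, and your argument does not actually use the hypothesis $s>\eta$ (which the paper needs in order to apply the Markov property at the deterministic time~$\eta$). The only delicate point you might make explicit is that Lemma~\ref{tr} is being applied with the random pair $(s-\tau,s)$, which requires $0<\tau<s$; this holds almost surely on $A(y,s;U_{y,\theta})$ since $z\in D_k$ forces $\tau>0$ and jump times avoid the fixed time $s$.
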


These lemmas are not difficult to prove, but we first check that they imply Proposition \ref{notyprop}.

\begin{proof}[Proof of Proposition \ref{notyprop}]
By Markov's inequality,
\begin{multline*}\Px\bigg(\sum_{v\in L_\theta(y,t)}\sum_{z\neq y} N^v(z,t;U_{y,\theta}) \geq \sum_{v\in L_\theta(y,t)}\frac{f_\theta(y,t-\tau_y(v))}{\xi(y)^{9/10}} \hsl\bigg|\hsl\calG_{L_\theta(y,t)}\bigg)\\
\leq \frac{\sum_{v\in L_\theta(y,t)}\sum_{z\neq y} \Ex[ N^v(z,t;U_{y,\theta})|\calG_{L_\theta(y,t)}]}{\xi(y)^{-9/10}\sum_{v\in L_\theta(y,t)}f_\theta(y,t-\tau_y(v))}
\end{multline*}
almost surely. Also note that for any $v\in L_\theta(y,t)$,
\[\Ex[N^v (z,t;U_{y,\theta}) |\calG_{L_\theta(y,t)}] = \Ex_y[N(z,s;U_{y,\theta})]|_{s=t-\tau_y(v)}\]
so it suffices to show that
\[\sum_{z\neq y} \Ex_y[ N(z,s;U_{y,\theta})] \leq \frac{2d\xi(y)^{-19/20}}{(1-2^{-19/20})^2} f_\theta(y,s)\]
uniformly in $s>0$.

We apply Lemma \ref{ybest} with $\eta = 1/(8d\xi(y)^{19/20})$. (This is valid since by assumption $\theta > 10d\xi(y)^{19/20} > 2d + (\log2)/\eta$.) This tells us that for any $k\geq 1$,
\[\sup_{z\in D_k(y)} \Ex_z[e^{\int_0^s \xi(X(u))du}\ind_{A(y,s;U_{y,\theta})}] < \xi(y)^{-19k/20}\Ex_y[e^{\int_0^s \xi(X(u))du}\ind_{A(y,s;U_{y,\theta})}].\]
Therefore, summing over the $2kd$ vertices $z\in D_k(y)$ and then over $k$,
\begin{multline*}
\sum_{z\neq y} \Ex_z[e^{\int_0^s \xi(X(u))du}\ind_{A(y,s;U_{y,\theta})}]\\
\leq \sum_{k=1}^\infty 2kd \xi(y)^{-19k/20}\Ex_y[e^{\int_0^s \xi(X(u))du}\ind_{A(y,s;U_{y,\theta})}] \\
\leq \frac{2d\xi(y)^{-19/20}}{(1-2^{-19/20})^2} \Ex_y[e^{\int_0^s \xi(X(u))du}\ind_{A(y,s;U_{y,\theta})}].
\end{multline*}

Note that by time reversal and the many-to-one formula, for any $z\in\Z^d$
\[\Ex_z[e^{\int_0^s \xi(X(u))du}\ind_{A(y,s;U_{y,\theta})}] = \Ex_y[e^{\int_0^s \xi(X(u))du}\ind_{A(z,s;U_{y,\theta})}] = \Ex_y[ N(z,s;U_{y,\theta})],\]
and so we have shown that
\[\sum_{z\neq y} \Ex_y[ N(z,s;U_{y,\theta})] \leq \frac{2d\xi(y)^{-19/20}}{(1-2^{-19/20})^2} \Ex_y[ N(y,s;U_{y,\theta})]= \frac{2d\xi(y)^{-19/20}}{(1-2^{-19/20})^2}  f_\theta(y,s)\]
uniformly in $s$, as required.
\end{proof}

We now prove Lemmas \ref{tr} and \ref{ybest}.

\begin{proof}[Proof of Lemma \ref{tr}]
By time reversal,
\begin{align*}
\Ex_{z}\big[e^{\int_0^{t} \xi(X(u))du} \ind_{A(y,t;U_{y,\theta})}\big] &= \Ex_{y}\big[e^{\int_0^{t} \xi(X(u))du} \ind_{A(z,t;U_{y,\theta})}\big]\\
&\geq \Ex_{y}\big[e^{\int_0^{t} \xi(X(u))du} \ind_{A(z,t;U_{y,\theta})}\ind_{\{X(u)=y \hsl\forall u\leq t-s\}}\big].
\end{align*}
By the Markov property at time $t-s$, this is at least
\[e^{(\xi(y) - 2d)(t-s)}\Ex_{y}\big[e^{\int_0^{s} \xi(X(u))du} \ind_{A(z,s;U_{y,\theta})} \big].\]
Reversing time again we get the result.
\end{proof}

\begin{proof}[Proof of Lemma \ref{ybest}]
Let $J$ be the time of the first jump of our random walk, $J = \inf\{u>0 : X(u)\neq X(0)\}$, and let $B_k = \{z\in\Z^d : |z-y|\geq k\}$. Then
\begin{multline*}
\sup_{z\in B_k}\Ex_{z}\big[e^{\int_0^{s} \xi(X(u))du} \ind_{A(y,s;U_{y,\theta})}\big]\\
\leq\sup_{z\in B_k}\Ex_{z}\big[e^{\int_0^{s} \xi(X(u))du} \ind_{A(y,s;U_{y,\theta})}\ind_{\{J\leq \eta\}}\big]\\
+ \sup_{z\in B_k}\Ex_{z}\big[e^{\int_0^{s} \xi(X(u))du} \ind_{A(y,s;U_{y,\theta})}\ind_{\{J>\eta\}}\big].
\end{multline*}
If $J>\eta$ and $X(0)\neq y$, then on $A(y,s;U_{y,\theta})$ we have $\xi(X(u))\leq \xi(y)-\theta$ for all $u\leq \eta$. Therefore, by the Markov property,
\begin{multline*}
\sup_{z\in B_k}\Ex_{z}\big[e^{\int_0^{s} \xi(X(u))du} \ind_{A(y,s;U_{y,\theta})}\big]\\
\leq e^{\xi(y)\eta}\Px(J\leq \eta)\sup_{z\in B_{k-1}}\Ex_{z}\big[e^{\int_0^{s-\eta} \xi(X(u))du} \ind_{A(y,s-\eta;U_{y,\theta})}\big]\\
+ e^{(\xi(y)-\theta)\eta}\sup_{z\in B_k}\Ex_{z}\big[e^{\int_0^{s-\eta} \xi(X(u))du} \ind_{A(y,s-\eta;U_{y,\theta})}\big].
\end{multline*}
We have $\Px(J\leq \eta) = 1-e^{-2d\eta} \leq 2d\eta$, so applying Lemma \ref{tr},
\begin{multline*}
\sup_{z\in B_k}\Ex_{z}\big[e^{\int_0^{s} \xi(X(u))du} \ind_{A(y,s;U_{y,\theta})}\big]\\
\leq 2d\eta e^{2d\eta}\sup_{z\in B_{k-1}}\Ex_{z}\big[e^{\int_0^{s} \xi(X(u))du} \ind_{A(y,s;U_{y,\theta})}\big]\\
+ e^{-\theta\eta + 2d\eta}\sup_{z\in B_k}\Ex_{z}\big[e^{\int_0^{s} \xi(X(u))du} \ind_{A(y,s;U_{y,\theta})}\big].
\end{multline*}
Rearranging, we obtain
\[\sup_{z\in B_k}\Ex_{z}\big[e^{\int_0^{s} \xi(X(u))du} \ind_{A(y,s;U_{y,\theta})}\big]\leq \frac{2d\eta e^{2d\eta}}{1-e^{(2d-\theta)\eta}}\sup_{z\in B_{k-1}}\Ex_{z}\big[e^{\int_0^{s} \xi(X(u))du} \ind_{A(y,s;U_{y,\theta})}\big].\]
Note that if $\eta<1/(8d)$ and $\theta > 2d + (\log 2)/\eta$, then $\frac{2d\eta e^{2d\eta}}{1-e^{(2d-\theta)\eta}} < 8d\eta < 1$. Therefore either the left-hand side above is zero --- in which case the result trivially holds --- or the supremum on the right-hand side must be attained at some point in $B_{k-1}\setminus B_k = D_{k-1}(y)$. The first statement in the lemma follows by induction on $k$. The second statement follows from the first together with, for the last equality, the many-to-one formula.
\end{proof}

\section{The population at $y$ is big enough}\label{ysec}

Our main aim in this section is to prove the following result.

\begin{prop}\label{bigenough}
Suppose that $y\in\Z^d$ satisfies $(3+d)\xi(y)^{-1/16}\leq 1/2$ and $\xi(y)\geq e^{\sqrt{256+100d}}$. Suppose also that $\theta \geq 2d +\frac{\log 2}{16d}$. Then, if either $t  - H(y) \geq \frac{1 + \frac 1 4 \log \xi(y)}{\xi(y)}$ or $\# L_\theta(y,t) \geq \xi(y)^{1/2}$, we have
\begin{multline*}
\Px\Big(\sum_{v\in L_\theta(y,t)} N(y,t;U_{y,\theta}) < \xi(y)^{-4/5}\sum_{v\in L_\theta(y,t)} f_\theta(y,t-\tau_y(v))\Big| \calG_{L_\theta(y,t)}\Big)\\
\leq (3+d)\xi(y)^{-1/16} + \xi(y)te^{-\frac{1}{16}\xi(y)^{1/2}}.
\end{multline*}
\end{prop}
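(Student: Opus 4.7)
The strategy is to treat each arriving particle $v \in L_\theta(y,t)$ as an independent source producing descendants at $y$ with conditional mean $f_\theta(y, t-\tau_y(v))$, and to combine these contributions via concentration. I would proceed in two stages: first a per-particle lower bound (producing good descendants with high probability), then an aggregation over particles.

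For the per-particle estimate, I would start from a Paley-Zygmund second-moment bound (presumably Lemma~\ref{pz}) showing that a single particle starting at $y$ satisfies $\Px_y(N(y,s;U_{y,\theta}) \geq c\, f_\theta(y,s)) \geq c'$ for absolute constants $c, c' > 0$ once $s \geq C/\xi(y)$. The required upper bound on $\Ex_y[N(y,s;U_{y,\theta})^2]$ comes from a many-to-two decomposition, expressing it as $f_\theta(y,s) + 2\int_0^s \sum_z p_u(y,z)\, \xi(z)\, f_\theta(z, s-u)^2\, du$ with $p_u$ the walk kernel killed upon entering $U_{y,\theta}$; Lemma~\ref{ybest} gives $f_\theta(z, s-u) \leq f_\theta(y, s-u)$ and Lemma~\ref{tr} gives the exponential growth of $f_\theta(y, \cdot)$, so the integral is bounded by a constant multiple of $f_\theta(y,s)^2$. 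To boost this to a near-certainty (the role of Lemma~\ref{onev}), I would wait a small time $\eta$ chosen so that the population at $y$ at time $\eta$ exceeds $\xi(y)^{1/2}$ with overwhelming probability (from a comparison with a Yule process), then apply Paley-Zygmund independently to each of these particles. A Chernoff bound then yields $\Px_y(N(y,s;U_{y,\theta}) \geq \xi(y)^{-4/5} f_\theta(y,s)) \geq 1 - e^{-\xi(y)^{1/2}/16}$.

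To aggregate, partition $[0, t]$ into blocks of length $\Delta \sim 1/\xi(y)$, of which there are at most $\xi(y) t$. Within each block $I_k$, all arrival times $\tau_y(v)$ for $v \in L_\theta(y,t) \cap I_k$ differ by at most $\Delta$, so by Lemma~\ref{tr} all $f_\theta(y, t-\tau_y(v))$ lie within a bounded factor of $f_\theta(y, t - \sup I_k)$. Conditionally on $\calG_{L_\theta(y,t)}$, the counts $N^v(y,t;U_{y,\theta})$ are independent across $v$, and Lemma~\ref{jbulk} would use a Chernoff bound to guarantee that either the blockwise sum is at least a constant multiple of the blockwise expectation, or the block fails on an event of probability $\leq e^{-\xi(y)^{1/2}/16}$; a union bound over the $\xi(y) t$ blocks gives the $\xi(y) t\, e^{-\xi(y)^{1/2}/16}$ term in the final estimate. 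Blocks containing very few particles contribute negligibly to both sides because $f_\theta$ grows exponentially in $t - \tau_y(v)$, so the weight of the sum is carried by the earliest arrivals.

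The two hypotheses of the proposition are then handled in parallel. Under $\# L_\theta(y,t) \geq \xi(y)^{1/2}$, the blockwise Chernoff argument applies directly. Under $t - H(y) \geq (1 + \tfrac14 \log \xi(y))/\xi(y)$, Lemma~\ref{tr} gives $f_\theta(y, t-H(y)) \geq e\, \xi(y)^{1/4}$, so the first arriving particle alone has enough expected descendants, and the boosted per-particle estimate ensures it delivers a $\xi(y)^{-4/5}$ fraction of this with probability $\geq 1 - \xi(y)^{-1/16}$; this dominates the contribution from any later arrivals whose $f_\theta$ values are geometrically smaller. Adding the per-particle failure rate $(3+d)\xi(y)^{-1/16}$ (the factor $3+d$ absorbing the geometric series arising from distance-$k$ neighbours as in the proof of Lemma~\ref{ybest}, together with the discretization overhead) and the blockwise error $\xi(y) t\, e^{-\xi(y)^{1/2}/16}$ yields the stated bound. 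The main obstacle is the sharp non-uniformity of $f_\theta(y, t-\tau_y(v))$ across $v$: because $f_\theta$ grows exponentially in $t-\tau$, the earliest arrivals can dominate the sum on $L_\theta(y,t)$, so neither a pure law-of-large-numbers for the many-particle regime nor a pure single-particle estimate is sufficient by itself, and the time-discretization into blocks of length $1/\xi(y)$ is the device needed to interpolate smoothly between the two hypotheses.
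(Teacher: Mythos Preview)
Your outline tracks the paper's strategy closely: Paley--Zygmund plus a Yule-type boost for the per-particle estimate (Lemmas~\ref{pz} and~\ref{onev}), time-blocking with a per-block Chernoff bound for dense blocks (Lemma~\ref{jbulk}/Corollary~\ref{bigcor}), and the first-arriving particle $v_0$ absorbing the sparse blocks (Lemma~\ref{smalllem}). Two points, however, are not right as stated.

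First, your boosted per-particle bound $\Px_y(N(y,s;U_{y,\theta}) \geq \xi(y)^{-4/5} f_\theta(y,s)) \geq 1 - e^{-\xi(y)^{1/2}/16}$ is too strong. Waiting until the Yule-type population reaches $\xi(y)^{1/2}$ takes time of order $(\log\xi(y))/\xi(y)$, and the probability that this happens before any particle leaves $y$ is only $1 - O(\xi(y)^{-c})$, not exponentially close to $1$; this polynomial error is unavoidable in the waiting step and is exactly why the term $(3+d)\xi(y)^{-1/16}$ (not an exponential) appears in the proposition. (Incidentally, the factor $3+d$ comes from these waiting-time errors in Lemma~\ref{onev}, not from a distance-$k$ geometric series.) Second, your case split is off. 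Under $t-H(y)\ge (1+\tfrac14\log\xi(y))/\xi(y)$, the first particle does \emph{not} dominate ``any later arrivals'': if a huge number of particles arrive in some block, their aggregate $f_\theta$-weight can dwarf $f_\theta(y,t-H(y))$. The paper uses $v_0$ only to cover the \emph{sparse} blocks, whose total $f_\theta$-weight is bounded by $3\xi(y)^{1/2}f_\theta(y,t-H(y))$ via Lemma~\ref{smalllem}; dense blocks are always handled by the blockwise Chernoff. Conversely, in the case $t-H(y)$ small but $\#L_\theta(y,t)\ge\xi(y)^{1/2}$, the blockwise Chernoff does \emph{not} apply directly: the per-particle Paley--Zygmund input needs $t-\tau_y(v)\ge 1/\xi(y)$, which can fail for every particle. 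The paper patches this with a separate argument (Lemma~\ref{tsmall}) showing that late-arriving particles simply have not had time to leave $y$, so $N^v\ge 1$ already matches $f_\theta(y,t-\tau_y(v))\le e^2\xi(y)^{1/4}$.
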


The idea is simple: each particle that hits $y$ for the first time gives rise to at least $c f_\theta(y,t-\tau_y(v))$ particles at $y$ at time $t$, for some small constant $c$, with high probability. The conditions on $\xi(y)$ can be read as ``$\xi(y)$ is large'' (which will be true in the case we are interested in, since we will apply this result to the optimal point in the whole of $\Z^d$). Unfortunately the details are quite intricate, since there could be large fluctuations in how many particles are hitting $y$ at different times. We proceed via a series of lemmas.

\begin{lem}\label{pz}
For any $y\in\Z^d$ with $\xi(y)\geq 8d$, any $\theta \geq 2d + (\log 2)/16d$, and any $t\geq 1/\xi(y)$,
\[\Px_{y}\big( N(y,t;U_{y,\theta}) \geq \smfr{1}{2}\Ex_y[N(y,t;U_{y,\theta})]\big) \geq 1/16.\]
\end{lem}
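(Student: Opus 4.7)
The plan is a second-moment argument via the Paley--Zygmund inequality, applied to $Z := N(y,t;U_{y,\theta})$ under $\Px_y$. Recalling
\[\Px_y\bigl(Z \ge \tfrac12 \Ex_y[Z]\bigr) \ge \frac{(\Ex_y[Z])^2}{4\,\Ex_y[Z^2]},\]
the task reduces to showing $\Ex_y[Z^2]$ is not much bigger than $(\Ex_y[Z])^2$. The first moment is exactly $f_\theta(y,t)$ by Lemma \ref{le:mto}. For a lower bound I would apply Lemma \ref{tr} at $s=0$, $z=y$ (the left-hand side becomes $1$), yielding $f_\theta(y,t) \ge e^{(\xi(y)-2d)t}$; under the hypotheses $t \ge 1/\xi(y)$ and $\xi(y) \ge 8d$ this forces $\Ex_y[Z] \ge e^{3/4} > 2$, which is the lower bound needed to absorb the linear correction coming from $\Ex_y[Z^2] = \Ex_y[Z(Z-1)] + \Ex_y[Z]$.

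For the second moment I would use the standard many-to-two (two-spine / branching) formula for binary branching: decomposing $Z(Z-1)$ by the time $s$ and location $x$ at which the most recent common ancestor of an ordered pair splits gives
\[\Ex_y[Z(Z-1)] = 2\int_0^t \Ex_y\!\left[e^{\int_0^s \xi(X(u))\,du}\,\xi(X(s))\,\ind_{X|_{[0,s]}\text{ avoids }U_{y,\theta}}\,u(X(s),t-s)^2\right]ds,\]
where $u(z,r) := \Ex_z^\xi[N(y,r;U_{y,\theta})]$.

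Three bounds then tame this. First, on the avoidance event $\xi(X(s)) \le \xi(y)$, since $U_{y,\theta}^c\setminus\{y\}$ consists of sites of potential at most $\xi(y)-\theta$. Second, Lemma \ref{ybest}, applied after the time-reversal identity $u(z,r) = \Ex_z[e^{\int_0^r \xi\,du}\ind_{A(y,r;U_{y,\theta})}]$ (which comes from the many-to-one formula combined with reversibility of simple random walk), gives $u(z,r) \le f_\theta(y,r)$ for every $z$. Third, Lemma \ref{tr} yields $f_\theta(y,t-s) \le e^{-(\xi(y)-2d)s}\,f_\theta(y,t)$. Combining these with the trivial $\Ex_y[e^{\int_0^s \xi\,du}\ind_{\text{avoid}}] \le e^{\xi(y)s}$ leads to
\[\Ex_y[Z(Z-1)] \le 2\xi(y)\,f_\theta(y,t)^2 \int_0^t e^{-(\xi(y)-4d)s}\,ds \le \frac{2\xi(y)}{\xi(y)-4d}\,f_\theta(y,t)^2 \le 4\,f_\theta(y,t)^2,\]
where the last inequality uses $\xi(y) \ge 8d$.

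Feeding $\Ex_y[Z^2] \le 4 f_\theta(y,t)^2 + f_\theta(y,t)$ into Paley--Zygmund produces a bound of the form $1/(16 + O(1/f_\theta(y,t)))$, and the main obstacle is squeezing the numerical constant down to the advertised $1/16$. The lower bound $f_\theta(y,t) \ge e^{3/4}$ coming from $t \ge 1/\xi(y)$ is exactly what lets the linear correction be absorbed; should the constants come out slightly slack at the boundary $\xi(y)=8d$, the remedy is to upgrade the trivial estimate $\Ex_y[e^{\int_0^s \xi\,du}\ind_{\text{avoid}}] \le e^{\xi(y)s}$ to $C f_\theta(y,s)$ via Lemma \ref{ybest} and time reversal, which then replaces $\tfrac{2\xi(y)}{\xi(y)-4d}$ by the sharper $\tfrac{2\xi(y)}{\xi(y)-2d} \le 8/3$ in the bound on $\Ex_y[Z(Z-1)]$ and closes the gap comfortably.
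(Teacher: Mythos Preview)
Your proposal is correct and follows essentially the same route as the paper: a Paley--Zygmund argument with the second moment controlled via the many-to-two formula, then bounded using $\xi(X(s))\le\xi(y)$ on the avoidance event, Lemma~\ref{ybest} to replace the supremum over starting points by $f_\theta(y,\cdot)$, and Lemma~\ref{tr} for the exponential comparison in $s$. The paper's many-to-two (from \cite{HR11}) is written without your factor of~$2$, so they obtain $\Ex_y[Z^2]\le 4(\Ex_y[Z])^2$ and hence $1/16$ directly; your worry about the constant is thus a normalization issue in the spine formula rather than a gap in the argument.
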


\begin{proof}
This is a relatively straightforward second moment calculation. Fix $t$ and $y$. We use the many-to-two formula \cite[Section 4.2]{HR11}. This says that
\begin{multline*}
\Ex_y[N(y,t;U_{y,\theta})^2] - \Ex_y[N(y,t;U_{y,\theta})]\\
= \int_0^{t} \Ex_y\Big[\xi(X^{1,s}(s)) e^{\int_0^{t} \xi(X^{1,s}(u)) du + \int_s^{t} \xi(X^{2,s}(u)) du} \ind_{A^{1,s}\cap A^{2,s}}\Big] ds
\end{multline*}
where for each $s\geq0$, $(X^{1,s}(u))_{u\geq0}$ and $(X^{2,s}(u))_{u\geq0}$ satisfy
\begin{itemize}
\item $(X^{i,s}(u))_{u\geq0}$ is a simple symmetric continuous-time random walk on $\Z^d$, jumping to each neighbouring vertex at rate 1, for each $i=1,2$;
\item $X^{1,s}(u) = X^{2,s}(u)$ for all $u\leq s$;
\item $(X^{1,s}(s+u) - X^{1,s}(s))_{u\geq0}$ and $(X^{2,s}(s+u) - X^{2,s}(s))_{u\geq0}$ are independent,
\end{itemize}
and for each $i=1,2$, $A^{i,s}$ is the event
\[A^{i,s} = \{ X^{i,s}(t)=y, \hs \not\exists u\leq t \hbox{ with } X^{i,s}(u)\in U_{y,\theta}\}.\]

Note that on $A^{1,s}$, we have $\xi(X^{1,s}(u)) \leq \xi(y)$ for all $u\leq s\leq t$, so
\begin{multline*}
\Ex_y\Big[\xi(X^{1,s}(s)) e^{\int_0^{t} \xi(X^{1,s}(u)) du + \int_s^{t} \xi(X^{2,s}(u)) du} \ind_{A^{1,s}\cap A^{2,s}}\Big]\\
\leq \Ex_y\Big[\xi(y) e^{\xi(y)s + \int_s^{t} \xi(X^{1,s}(u)) du + \int_s^{t} \xi(X^{2,s}(u)) du} \ind_{A^{1,s}\cap A^{2,s}}\Big].
\end{multline*}
Also, since $(X^{1,s}(s+u) - X^{1,s}(s))_{u\geq0}$ and $(X^{2,s}(s+u) - X^{2,s}(s))_{u\geq0}$ are independent, by the Markov property the right-hand side above is at most
\[\xi(y) e^{\xi(y)s} \sup_{z\in\Z^d}\Ex_z[e^{\int_0^{t-s}\xi(X(u))du}\ind_{A(y,t-s;U_{y,\theta})}]^2.\]
Thus
\begin{multline*}\Ex_y[ N(y,t;U_{y,\theta})^2] - \Ex_y[N(y,t;U_{y,\theta})]\\
\leq \int_0^{t} \xi(y) e^{\xi(y)s} \sup_{z\in\Z^d}\Ex_z[e^{\int_0^{t-s}\xi(X(u))du}\ind_{A(y,t-s;U_{y,\theta})}]^2 ds.
\end{multline*}
By Lemma \ref{tr}, this is at most
\[\int_0^{t} \xi(y) e^{(4d-\xi(y))s} \sup_{z\in\Z^d}\Ex_z[e^{\int_0^{t}\xi(X(u))du}\ind_{A(y,t;U_{y,\theta})}]^2 ds\]
and by Lemma \ref{ybest}, provided $\theta \geq 2d + (\log 2)/16d$, the supremum is achieved at $y$, so we get
\begin{multline*}
\Ex_y[ N(y,t;U_{y,\theta})^2] - \Ex_y[ N(y,t;U_{y,\theta})]\\
\leq \int_0^{t} \xi(y) e^{(4d-\xi(y))s} \Ex_y[e^{\int_0^{t}\xi(X(u))du}\ind_{A(y,t;U_{y,\theta})}]^2 ds\\
\leq \frac{\xi(y)}{\xi(y)-4d} \Ex_y[ N(y,t;U_{y,\theta}) ]^2.
\end{multline*}
To finish off, note that by Lemma \ref{tr}, when $\xi(y)\geq 8d$ we have
\[\Ex_y[ N(y,t;U_{y,\theta})] \le \Ex_y[ N(y,t;U_{y,\theta})] e^{(\xi(y) - 2d)t} \le \Ex_y[N(y,t;U_{y,\theta})]^2;\]
combining this with the estimate above we get
\[\Ex_y[ N(y,t;U_{y,\theta})^2] \leq 4 \Ex_y[ N(y,t;U_{y,\theta})]^2\]
and the result follows from the Paley-Zygmund inequalty.
\end{proof}

\begin{lem}\label{onev}
Suppose that $y\in\Z^d$ satisfies $\xi(y)\geq 8d$, and that $\theta \geq 2d + \frac{\log 2}{16d}$ and $t\geq \frac{1}{\xi(y)} + \frac{\log\xi(y)}{4\xi(y)}$. Then
\[\Px_y\Big(N(y,t;U_{y,\theta}) \leq \smfr{1}{2\xi(y)^{1/4}} f_\theta(y,t)\Big) \leq (3+d) \xi(y)^{-1/16}.\]
\end{lem}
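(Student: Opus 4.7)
The plan is to split the time interval at $s := \frac{\log \xi(y)}{4\xi(y)}$, chosen so that the hypothesis $t \geq \frac{1}{\xi(y)} + \frac{\log\xi(y)}{4\xi(y)}$ yields $t-s \geq \tfrac{1}{\xi(y)}$, precisely the lower bound on time required by Lemma~\ref{pz}. The strategy is: by time $s$ the initial particle will typically have produced a large number $K$ of descendants at $y$ whose ancestors have never left $y$; by the strong Markov property at $s$ each of these $K$ particles then evolves independently on $[s,t]$, and Lemma~\ref{pz} gives each one, independently, probability at least $\tfrac{1}{16}$ of contributing at least $\tfrac12 f_\theta(y,t-s)$ descendants to $N(y,t;U_{y,\theta})$. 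Since one success suffices, the failure probability decays exponentially in $K$.

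The first key ingredient is a semigroup bound that converts the target $\tfrac{1}{2\xi(y)^{1/4}}f_\theta(y,t)$ into the simpler threshold $\tfrac12 f_\theta(y,t-s)$. By the Markov property of the random walk at time $s$ applied to the Feynman--Kac representation,
\[ f_\theta(y,t) = \Ex_y\bigl[e^{\int_0^s \xi(X(u))du}\ind_{\{X\text{ avoids }U_{y,\theta}\text{ on }[0,s]\}}\, g(X(s))\bigr], \]
where $g(z) := \Ex_z[e^{\int_0^{t-s}\xi(X(u))du}\ind_{A(y,t-s;U_{y,\theta})}]$. Lemma~\ref{ybest} gives $\sup_z g(z) = g(y) = f_\theta(y,t-s)$, and on the avoidance event $\xi(X(u)) \leq \xi(y)$ so the remaining factor is at most $e^{\xi(y)s} = \xi(y)^{1/4}$. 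Hence $f_\theta(y,t) \leq \xi(y)^{1/4} f_\theta(y,t-s)$, and it suffices to produce $\tfrac12 f_\theta(y,t-s)$ particles at $y$ avoiding $U_{y,\theta}$.

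The second ingredient is a tail estimate on $K$. Since a particle at $y$ branches at rate $\xi(y)$ and leaves $y$ at total jump rate $2d$, the process $(K_u)_{u\le s}$ is a linear birth--death chain starting from $1$ with birth rate $\xi(y)$ and death rate $2d$. Using its explicit distribution --- extinction probability of order $d/\xi(y)$, and, given survival, approximately geometric with parameter $e^{-(\xi(y)-2d)s} \approx \xi(y)^{-1/4}$ --- one checks that for $k_0 := \lceil \log \xi(y)\rceil$,
\[ \Px_y(K < k_0) \leq (2+d)\,\xi(y)^{-1/16}. \]
Conditional on $\{K \geq k_0\}$, the probability that none of the $K$ independent attempts succeeds is at most $(15/16)^{k_0} \leq \xi(y)^{-\log(16/15)} \leq \xi(y)^{-1/16}$, using $\log(16/15) > 1/16$. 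Summing the two error contributions gives the claimed $(3+d)\xi(y)^{-1/16}$; when this quantity already exceeds $1$ the statement is vacuous, so the argument only needs to work for large $\xi(y)$. The main technical nuisance is the tail bound on $K$ at the very short time $s$, before the birth--death process has settled into its asymptotic geometric shape, but this is handled by direct computation since $\xi(y) \gg d$ under the hypothesis makes the death rate a small perturbation of a Yule process.
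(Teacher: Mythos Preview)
Your proposal is correct and follows essentially the same route as the paper: split at $s=\frac{\log\xi(y)}{4\xi(y)}$, show the birth--death process of particles that never leave $y$ accumulates many particles by time $s$, apply Lemma~\ref{pz} independently to each, and convert the target via $f_\theta(y,t)\le \xi(y)^{1/4}f_\theta(y,t-s)$. The only variation is in how the short-time buildup is handled: the paper works with the stopping time $T_n=\inf\{u:\Upsilon_u=n\}$ for $n=\lceil\xi(y)^{1/16}\rceil$ and shows $T_n\le D_1\wedge s$ via an exponential-moment bound on a sum of independent exponentials (together with the crude estimate $\Px_y(D_1<u\wedge T_n)\le 2dnu$), whereas you fix the time $s$ and invoke the explicit birth--death distribution; the stopping-time route makes the constants fall out with less computation than the ``direct computation'' you defer.
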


\begin{proof}
For $s\geq0$, let
\[\Upsilon_s = \#\{v\in Y(y,s) : X_v(u) = y \hs \forall u\leq s\}.\]
Then $(\Upsilon_s)_{s\geq0}$ is a birth-death process with birth rate $\xi(y)$ and death rate $2d$.

Let $D_1$ be the time of the first death (i.e.~the first time a particle leaves $y$), and let $T_n = \inf\{s\geq0 : \Upsilon_s = n\}$. If $D_1\geq T_n$ then $T_n$ is simply the time of the $(n-1)$th birth. Therefore, for any $u\geq0$,
\[\Px_y(u\leq T_n \leq D_1) \leq P\bigg(\sum_{j=1}^{n-1} V_j \geq u\bigg)\]
where under $P$, the random variables $(V_j)_{j\geq1}$ are independent, and $V_j$ is exponentially distributed with parameter $\xi(y)j$ for each $j$. Thus
\[\Px_y(u\leq T_n \leq D_1) \leq E[e^{\frac{\xi(y)}{2}\sum_{j=1}^{n-1} V_j}]e^{-\xi(y)u/2} \leq \prod_{j=1}^{n-1} (1-\smfr{1}{2j})^{-1} e^{-\xi(y)u/2}.\]
But
\[\prod_{j=1}^{n-1} \left(1-\smfr{1}{2j}\right)^{-1} = \exp\bigg(-\sum_{j=1}^{n-1} \log\left(1-\smfr{1}{2j}\right)\bigg) \leq \exp\bigg(\sum_{j=1}^{n-1} \frac{1}{j}\bigg) \leq n,\]
so fixing $n = \lceil \exp(\xi(y)u/4) \rceil$ we get
\[\Px_y(u\leq T_n \leq D_1) \leq 2 e^{-\xi(y)u/4}.\]
But
\[\Px_y(D_1 < u\wedge T_n) \leq 1-e^{-2dnu} \leq 2dnu,\]
so fixing $u = \frac{\log \xi(y)}{4\xi(y)}$, we have
\[\Px_y( T_n > D_1\wedge u ) \leq 2dnu + 2 e^{-\xi(y)u/4} \leq (2+d)\xi(y)^{-1/16}.\]
We now concentrate on the event $\{T_n\leq D_1\wedge u\}$.

On the event $\{T_n\leq D_1\wedge u\}$, at time $T_n$ we have $n$ particles at $y$ (that have never left $y$). Each of these has an independent descendance, and therefore (almost surely on the event $\{T_n\leq D_1\wedge u\}$)
\[\Px_y\Big(N(y,t;U_{y,\theta}) \leq \smfr{1}{2\xi(y)^{1/4}}f_\theta(y,t) \Big| \F_{T_n}\Big) \leq \sup_{s\in[0,u]}\Px_y\Big( N(y,t-s;U_{y,\theta}) \leq \smfr{1}{2\xi(y)^{1/4}}f_\theta(y,t)\Big)^n.\]
Clearly $f_\theta(y,t)\leq e^{\xi(y)s}f_\theta(y,t-s)$ for any $s\in[0,t]$, and thus by our choice of $u$, on the event $\{T_n\leq D_1\wedge u\}$,
\[\Px_y\Big( N(y,t;U_{y,\theta}) \leq \smfr{1}{2\xi(y)^{1/4}}f_\theta(y,t) \Big| \F_{T_n}\Big) \leq \sup_{s\in[0,u]}\Px_y\Big( N(y,t-s;U_{y,\theta}) \leq \smfr{1}{2}f_\theta(y,t-s)\Big)^n.\]
Applying Lemma \ref{pz} tells us that this is at most $(15/16)^n$, which is smaller than $\xi(y)^{-1/16}$, as required.
\end{proof}

For each $j\geq0$, define
\[I_j(z) = [H(z) + j/\xi(z), H(z) + (j+1)/\xi(z))\]
and
\[L_{j,\theta}(z,t) = \{v\in L_\theta(z,t) : \tau_z(v) \in I_j(z)\}.\]

\begin{lem}\label{jbulk}
Suppose that $y\in\Z^d$ satisfies $\xi(y)\geq 8d$ and $(3+d) \xi(y)^{-1/16} \leq 1/2$, and that $\theta \geq 2d + \frac{\log 2}{16d}$. If $\#L_{j,\theta}(y,t) \geq \xi(y)^{1/2}$ and $t\geq H(y) + \frac{j+2}{\xi(y)} + \frac{\log \xi(y)}{4\xi(y)}$,  then
\[\Px\Big(\sum_{v\in L_{j,\theta}(y,t)} N^v(y,t;U_{y,\theta}) < \frac{1}{8e\xi(y)^{1/4}}\sum_{v\in L_{j,\theta}(y,t)} f_\theta(y,t-\tau_y(v)) \Big| \calG_{L_\theta(y,t)}\Big)\leq e^{-\frac{1}{16}\xi(y)^{1/2}}.\]
\end{lem}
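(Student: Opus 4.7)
My plan is to exploit the conditional independence of the subtrees rooted at distinct particles $v \in L_{j,\theta}(y,t)$, apply Lemma~\ref{onev} to each one, and then conclude by a Chernoff-type concentration argument, using the fact that the hitting times $\tau_y(v)$ for $v \in L_{j,\theta}(y,t)$ all lie within an interval of length only $1/\xi(y)$, so the quantities $f_\theta(y,t-\tau_y(v))$ are all comparable up to a factor of $e$.

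First, I would apply the strong Markov property at the stopping times $\tau_y(v)$. Conditionally on $\calG_{L_\theta(y,t)}$, the subtrees rooted at the particles $v \in L_{j,\theta}(y,t)$ evolve as independent branching random walks started from $y$ at the respective times $\tau_y(v)$. Because the ancestry of $v$ already avoids $U_{y,\theta}$ up to $\tau_y(v)$, the variable $N^v(y,t;U_{y,\theta})$ is distributed (under the conditional law) as $N(y,t-\tau_y(v);U_{y,\theta})$ under $P^\xi_y$, and the family is conditionally independent. The hypothesis $\tau_y(v) \in I_j(y)$ combined with $t \ge H(y) + (j+2)/\xi(y) + \log\xi(y)/(4\xi(y))$ gives $t-\tau_y(v) \ge 1/\xi(y) + \log\xi(y)/(4\xi(y))$, so Lemma~\ref{onev} applies to each $v$ and tells us that the indicator
\[Z_v := \ind\bigl\{N^v(y,t;U_{y,\theta}) \ge \smfr{1}{2\xi(y)^{1/4}} f_\theta(y,t-\tau_y(v))\bigr\}\]
satisfies $\Px(Z_v = 1 \mid \calG_{L_\theta(y,t)}) \ge 1 - (3+d)\xi(y)^{-1/16} \ge 1/2$.

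Write $n = \# L_{j,\theta}(y,t)$ and $B = \sum_{v \in L_{j,\theta}(y,t)} Z_v$. The $Z_v$ are conditionally independent Bernoullis of parameter at least $1/2$, so $B$ stochastically dominates $\mathrm{Bin}(n,1/2)$, and Hoeffding's inequality gives
\[\Px(B < n/4 \mid \calG_{L_\theta(y,t)}) \le e^{-n/8} \le e^{-\xi(y)^{1/2}/16}\]
using $n \ge \xi(y)^{1/2}$. On the complementary event $\{B \ge n/4\}$, write $M = \max_{v \in L_{j,\theta}(y,t)} f_\theta(y,t-\tau_y(v))$; since any two $\tau_y(v), \tau_y(v')$ differ by at most $1/\xi(y)$, the bound $f_\theta(y,s) \le e^{\xi(y)u} f_\theta(y,s-u)$ (already used in the proof of Lemma~\ref{onev}) yields $f_\theta(y,t-\tau_y(v)) \ge M/e$ for every $v$. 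Consequently
\[\sum_{v \in L_{j,\theta}(y,t)} N^v(y,t;U_{y,\theta}) \;\ge\; \sum_{v : Z_v = 1} \frac{f_\theta(y,t-\tau_y(v))}{2\xi(y)^{1/4}} \;\ge\; \frac{(n/4) \cdot (M/e)}{2\xi(y)^{1/4}} \;\ge\; \frac{1}{8e\xi(y)^{1/4}} \sum_{v \in L_{j,\theta}(y,t)} f_\theta(y,t-\tau_y(v)),\]
where the final inequality uses $\sum_v f_\theta(y,t-\tau_y(v)) \le nM$. This gives the desired bound.

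The main technical subtlety is the first step: verifying that the conditioning on $\calG_{L_\theta(y,t)}$ (which encodes membership in $L_\theta(y,t)$ and the hitting times, and thus contains information about trajectories of other ancestral lines) does not distort the distribution of what the distinct subtrees do \emph{after} their hitting times. The rest---Lemma~\ref{onev}, a standard Chernoff bound, and the observation that $f_\theta(y,\cdot)$ varies by at most a factor of $e$ on $I_j(y)$---is essentially bookkeeping and matches the constants in the statement exactly.
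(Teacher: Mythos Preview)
Your proposal is correct and follows essentially the same approach as the paper: both apply Lemma~\ref{onev} to each $v\in L_{j,\theta}(y,t)$, define the same indicators $Z_v$, use a Chernoff/Hoeffding bound on $\sum_v Z_v$ (the paper cites \cite[Lemma~2.6]{OR14} and gets $e^{-n/16}$ where you get $e^{-n/8}$ via Hoeffding), and then exploit that all the $f_\theta(y,t-\tau_y(v))$ agree up to a factor of $e$ on the interval $I_j(y)$. The only cosmetic difference is that the paper anchors the comparison at the fixed value $f_\theta(y,t-H(y)-j/\xi(y))$ rather than your $M=\max_v f_\theta(y,t-\tau_y(v))$, and invokes Lemma~\ref{tr} explicitly for the factor-of-$e$ bound.
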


\begin{proof}
We recall the Chernoff bound from \cite[Lemma 2.6]{OR14}, which said that if $Z_1,\ldots,Z_k$ are independent Bernoulli random variables and $Z = \sum_{i=1}^k Z_i$, then
\[P\left(Z\leq \frac{E[Z]}{2} \right)\leq \exp\left(-\frac{E[Z]}{8} \right).\]
Lemma \ref{onev} tells us that for any $v\in L_{j,\theta}(y,t)$,
\[\Px\Big(N^v(y,t;U_{y,\theta}) \leq \smfr{1}{2\xi(y)^{1/4}} f_\theta(y,t-\tau_y(v))\Big|\calG_{L_\theta(y,t)}\Big) \leq (3+d) \xi(y)^{-1/16} \leq 1/2.\]
Letting $Z_v$ be the indicator that $N^v(y,t;U_{y,\theta}) > \smfr{1}{2\xi(y)^{1/4}} f_\theta(y,t-\tau_y(v))$, we get
\begin{equation}\label{vchern}
\Px\big(\#\{v\in L_{j,\theta}(y,t) : Z_v = 1\} \leq \smfr{1}{4} \# L_{j,\theta}(y,t) \big| \calG_{L_\theta(y,t)}\big) \leq e^{-\frac{1}{16}\#L_{j,\theta}(y,t)} \leq e^{-\frac{1}{16}\xi(y)^{1/2}}.
\end{equation}

Note that if $v\in L_{j,\theta}(y,t)$, then by Lemma~\ref{tr}
\[f_\theta(y,t-\tau_y(v)) \geq f_\theta(y,t-H(y)-\smfr{j+1}{\xi(y)}) \geq e^{-1} f_\theta(y,t-H(y)-\smfr{j}{\xi(y)}).\]
Therefore (\ref{vchern}) tells us that
\[\Px\Big(\sum_{v\in L_{j,\theta}(y,t)} \hspace{-3mm} N^v(y,t;U_{y,\theta}) \leq \frac{1}{8e\xi(y)^{1/4}}\sum_{v\in L_{j,\theta}(y,t)} \hspace{-3mm} f_\theta(y,t-H(y)-\smfr{j}{\xi(y)})\Big|\calG_{L_\theta(y,t)}\Big) \leq e^{-\frac{1}{16}\xi(y)^{1/2}}\]
and the result follows from the fact that for any $v\in L_{j,\theta}(y,t)$, since $H(y)+j/\xi(y)\le \tau_y(v)$, we have $f_\theta(y,t-H(y)-\smfr{j}{\xi(y)}) \geq f_\theta(y,t-\tau_y(v))$.
\end{proof}

Lemma \ref{jbulk} takes care of most values of $j$. However we still need to consider some ``boundary'' cases, when either the number of particles absorbed in $I_j(y)$ is small, or $t-\tau_y(v)$ is small. We begin with the latter. Let
\[\bar L_\theta(y,t) = \{v\in L_\theta(y,t) : t-\tau_y(v) \in [0,\smfr{2}{\xi(y)} + \smfr{\log \xi(y)}{4\xi(y)}]\}.\]

\begin{lem}\label{tsmall}
Suppose that $y\in\Z^d$ satisfies $\xi(y)\geq e^{\sqrt{256+100d}}$, and that $\theta\geq 0$. If $\#\bar L_\theta(y,t) \geq \xi(y)^{1/2}$, then
\[\Px\Big(\sum_{v\in \bar L_\theta(y,t)} N^v(y,t;U_{y,\theta}) < \frac{1}{4e^2\xi(y)^{1/4}}\sum_{v\in \bar L_\theta(y,t)} f_\theta(y,t-\tau_y(v)) \Big| \calG_{L_\theta(y,t)}\Big)\leq e^{-\frac{1}{16}\xi(y)^{1/2}}.\]
\end{lem}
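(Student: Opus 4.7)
The plan is to show that each particle $v\in\bar L_\theta(y,t)$ has at least one descendant in $Y(y,t;U_{y,\theta})$ with conditional probability at least $1/2$, and then to apply the Chernoff bound from \cite[Lemma 2.6]{OR14}. First observe that for any $s\in[0,\tfrac{2}{\xi(y)}+\tfrac{\log\xi(y)}{4\xi(y)}]$, on the event $A(y,s;U_{y,\theta})$ the walk only visits sites $z$ with $\xi(z)\le\xi(y)$, so
\[f_\theta(y,s)\le e^{\xi(y)s}\le e^2\xi(y)^{1/4}.\]
Hence $\sum_{v\in\bar L_\theta(y,t)} f_\theta(y,t-\tau_y(v))\le e^2\xi(y)^{1/4}\,\#\bar L_\theta(y,t)$, and it suffices to produce an event of the required conditional probability on which $\sum_{v\in\bar L_\theta(y,t)} N^v(y,t;U_{y,\theta})\ge \tfrac{1}{4}\#\bar L_\theta(y,t)$.

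For each $v\in\bar L_\theta(y,t)$ and $s\ge 0$, let $\Upsilon^v_s$ denote the number of descendants of $v$ that are at $y$ at time $\tau_y(v)+s$ and have never left $y$ throughout $[\tau_y(v),\tau_y(v)+s]$. Conditional on $\calG_{L_\theta(y,t)}$, $(\Upsilon^v_s)_{s\ge 0}$ is a linear birth-death process starting from $1$ with birth rate $\xi(y)$ (branching) and death rate $2d$ (jumping away from $y$), so its probability of extinction by any finite time is bounded by the ultimate extinction probability $2d/\xi(y)$. The standing assumption $\xi(y)\ge e^{\sqrt{256+100d}}\ge 4d$ therefore gives, with $s_v:=t-\tau_y(v)$,
\[\Px\bigl(\Upsilon^v_{s_v}\ge 1\,\big|\,\calG_{L_\theta(y,t)}\bigr)\ge 1/2.\]
On $\{\Upsilon^v_{s_v}\ge 1\}$ there is a descendant of $v$ at $y$ at time $t$ whose entire path (from the origin, through $v$, to itself) avoids $U_{y,\theta}$, so $N^v(y,t;U_{y,\theta})\ge 1$.

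Set $Z_v=\ind\{N^v(y,t;U_{y,\theta})\ge 1\}$ and $Z=\sum_{v\in\bar L_\theta(y,t)} Z_v$. Since no particle in $L_\theta(y,t)$ is an ancestor of another, the strong Markov property applied at each time $\tau_y(v)$ shows that the sub-branching-random-walks rooted at the different $v$'s are conditionally independent given $\calG_{L_\theta(y,t)}$, so the $Z_v$ are conditionally independent Bernoulli variables each with parameter at least $1/2$. Thus $\Ex[Z\mid\calG_{L_\theta(y,t)}]\ge\#\bar L_\theta(y,t)/2\ge\xi(y)^{1/2}/2$, and the Chernoff bound from \cite[Lemma 2.6]{OR14} yields
\[\Px\bigl(Z\le\tfrac{1}{4}\#\bar L_\theta(y,t)\,\big|\,\calG_{L_\theta(y,t)}\bigr)\le e^{-\#\bar L_\theta(y,t)/16}\le e^{-\xi(y)^{1/2}/16}.\]
On the complementary event, $\sum_v N^v(y,t;U_{y,\theta})\ge Z\ge\tfrac{1}{4}\#\bar L_\theta(y,t)\ge\tfrac{1}{4e^2\xi(y)^{1/4}}\sum_v f_\theta(y,t-\tau_y(v))$ by the first step, giving the claimed bound.

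The main subtlety is the conditional independence of the $Z_v$: one has to check that $\calG_{L_\theta(y,t)}$ does not leak information about the post-$\tau_y(v)$ evolution. This is true because membership $v\in L_\theta(y,t)$ is determined by the behaviour of $v$ and of its strict ancestors, all of whom have died by time $\tau_y(v)$; so given $\calG_{L_\theta(y,t)}$, the sub-BRWs rooted at the different $v\in\bar L_\theta(y,t)$ are independent copies of a BRW started at $y$ in the environment $\xi$.
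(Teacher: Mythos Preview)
Your proof is correct and follows the same overall architecture as the paper's: bound $f_\theta(y,t-\tau_y(v))\le e^2\xi(y)^{1/4}$, show each $v\in\bar L_\theta(y,t)$ has a descendant at $y$ at time $t$ with conditional probability at least $1/2$, then apply the Chernoff bound from \cite[Lemma 2.6]{OR14}.

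The genuine difference lies in how you obtain the $1/2$ bound. The paper argues via the first-death time $D_1$ and the hitting time $T_n$ of level $n=\lceil\xi(y)^{1/2}\rceil$: it uses Chebyshev to show $\Px_y(T_n<u)\le 128/(\log\xi(y))^2$, then bounds $\Px_y(D_1<u\wedge T_n)\le 2dnu$, and combines these to get $\Px_y(\Upsilon_u=0)\le\Px_y(D_1<u)\le(128+50d)/(\log\xi(y))^2\le 1/2$. This is where the paper's peculiar hypothesis $\xi(y)\ge e^{\sqrt{256+100d}}$ is actually used. Your route---bounding the finite-time extinction probability of the linear birth-death process by its classical ultimate extinction probability $2d/\xi(y)$---is both shorter and requires only $\xi(y)\ge 4d$, which is far weaker than the stated hypothesis. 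So your argument not only works but would allow the lemma to be stated under the milder assumption $\xi(y)\ge 4d$.

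Your closing remark on conditional independence is also well taken; the paper appeals to this implicitly in Lemma~\ref{jbulk} without spelling it out.
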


\begin{proof}
The argument is, at heart, very simple: if $v\in \bar L_\theta(y,t)$, then since $t-\tau_y(v)$ is so small, there is not enough time for any of $v$'s descendants to leave $y$ before $t$. We now provide the details.

Recall the notation from Lemma \ref{onev}: when starting with one particle at $y$, $\Upsilon_s$ is the number of particles that have stayed at $y$ up to time $s$, $D_1$ is the first time a particle leaves $y$, and $T_n$ is the first time there are $n$ particles at $y$ who have never left.  Let $u=\smfr{2}{\xi(y)} + \smfr{\log \xi(y)}{4\xi(y)}$ and $n=\lceil \xi(y)^{1/2}\rceil$. Then
\[\Px\big( N^v(y,t;U_{y,\theta}) < 1 \big| \calG_{L_\theta(y,t)}\big)\leq \Px_y(\Upsilon_u = 0).\]
Also, if $V_j$ is exponentially distributed with parameter $\xi(y)j$ and $(V_j)_{j\geq1}$ are independent, setting $V = \sum_{i=1}^{n-1} V_j$,
\[\Px_y(T_n<u) \leq P(V < u) \leq P\big(|V-E[V]| > E[V]-u) \leq \frac{E[(V-E[V])^2]}{(E[V]-u)^2}.\]
But
\[E[(V-E[V])^2] = \sum_{i=1}^{n-1} \hbox{Var}(V_i) = \sum_{i=1}^{n-1} \frac{1}{\xi(y)^2 i^2} \leq \frac{2}{\xi(y)^2},\]
and
\[E[V]-u = \sum_{i=1}^{n-1} \frac{1}{\xi(y)i} - u > \frac{\log n}{\xi(y)} - \frac{2}{\xi(y)} - \frac{\log \xi(y)}{4\xi(y)} \geq \frac{1}{\xi(y)}(\smfr{1}{4}\log\xi(y)-2) \geq \frac{\log \xi(y)}{8\xi(y)}\]
since $\xi(y)\geq e^{16}$. Thus $\Px_y(T_n<u) \leq 128/(\log \xi(y))^2$.

Then
\begin{multline*}
\Px_y(\Upsilon_u=0) \leq \Px_y(D_1<u) \leq \Px_y(T_n<u) + \Px_y(D_1 < u\wedge T_n)\\
\leq \frac{128}{(\log \xi(y))^2} + 1-e^{-2dnu} \leq \frac{128}{(\log \xi(y))^2} + 2dnu \leq \frac{128 + 50d}{(\log\xi(y))^2} \leq \frac12
\end{multline*}
where we used the fact that $nu\leq 25/\log^2 \xi(y)$, since $(\log x)^3 / x^{1/2}\leq 25$ for all $x>1$.
Applying the Chernoff bound (\cite[Lemma 2.6]{OR14}) in the same way as in the proof of Lemma \ref{jbulk}, we get
\begin{align*}\Px\big(\#\{v\in \bar L_\theta(y,t) : N^v(y,t;U_{y,\theta}) \ge 1\} < \smfr{1}{4}\#\bar L_\theta(y,t) \big|\calG_{L_\theta(y,t)} \big) &\leq e^{-\frac{1}{16}\#\bar L_\theta(y,t)}\\
&\leq e^{-\frac{1}{16}\xi(y)^{1/2}}.
\end{align*}
For any $v\in\bar L_\theta(y,t)$ we have $t-\tau_y(v)\le \frac{2}{\xi(y)} + \frac{\log \xi(y)}{4\xi(y)}$, so
\[f_\theta(y,t-\tau_y(v))\le e^{\xi(y)(\frac{2}{\xi(y)} + \frac{\log \xi(y)}{4\xi(y)})}f_\theta(y,0) = e^2 \xi(y)^{1/4}.\]
Therefore
\[\Px\Big(\sum_{v\in \bar L_\theta(y,t)} N^v(y,t;U_{y,\theta}) < \frac{1}{4e^2\xi(y)^{1/4}}\sum_{v\in \bar L_\theta(y,t)}f_\theta(y,t-\tau_y(v)) \Big|\calG_{L_\theta(y,t)}\Big) \leq e^{-\frac{1}{16}\xi(y)^{1/2}}.\qedhere\]
\end{proof}

Setting
\[J(y) = \{j\in\{0,\ldots,\lfloor \xi(y)(t-H(y))\rfloor\} : \#L_{j,\theta}(y,t)\geq \xi(y)^{1/2}\},\]
we can combine Lemmas \ref{jbulk} and \ref{tsmall} to get the following corollary.

\begin{cor}\label{bigcor}
Suppose that $y\in\Z^d$ satisfies $(3+d) \xi(y)^{-1/16} \leq 1/2$ and $\xi(y)\geq e^{\sqrt{256+100d}}$, and that $\theta \geq 2d + \frac{\log 2}{16d}$. Then
\begin{multline*}
\Px\Big(\sum_{j\in J(y)} \sum_{v\in L_{j,\theta}(y,t)} N^v(y,t;U_{y,\theta}) \leq \frac{1}{4e^2\xi(y)^{1/4}} \sum_{j\in J(y)}\sum_{v\in L_{j,\theta}(y,t)} f_\theta(y,t-\tau_y(v)) \Big| \calG_{L_\theta(y,t)}\Big)\\
\leq \xi(y) t e^{-\frac{1}{16}\xi(y)^{1/2}}.
\end{multline*}
\end{cor}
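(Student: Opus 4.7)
The plan is to partition $J(y)$ according to which of the two preceding lemmas applies, and then union-bound over the pieces. Define
\[J_1(y) = \bigl\{j\in J(y) : t \geq H(y) + \tfrac{j+2}{\xi(y)} + \tfrac{\log\xi(y)}{4\xi(y)}\bigr\}\quad\text{and}\quad J_2(y) = J(y)\setminus J_1(y),\]
so that $J_1(y)$ consists of the indices for which the hypothesis of Lemma~\ref{jbulk} is satisfied. Note that $\#J(y) \le \lfloor \xi(y)(t - H(y))\rfloor + 1$, so any union bound over $J(y)$ contributes at most $\xi(y)t + 1$ terms.

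For each $j\in J_1(y)$, Lemma~\ref{jbulk} gives, with conditional probability at least $1 - e^{-\xi(y)^{1/2}/16}$,
\[\sum_{v\in L_{j,\theta}(y,t)} N^v(y,t;U_{y,\theta}) \ge \frac{1}{8e\xi(y)^{1/4}}\sum_{v\in L_{j,\theta}(y,t)} f_\theta(y,t-\tau_y(v)),\]
which is stronger than what we need because $1/(8e) \ge 1/(4e^2)$. A union bound over $j\in J_1(y)$ produces a total failure probability of order $\xi(y)t\cdot e^{-\xi(y)^{1/2}/16}$ for this part.

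For $j\in J_2(y)$, the definition of $J_2(y)$ forces $t - H(y) - j/\xi(y) < \tfrac{2}{\xi(y)} + \tfrac{\log\xi(y)}{4\xi(y)}$, and since every $v\in L_{j,\theta}(y,t)$ has $\tau_y(v)\ge H(y) + j/\xi(y)$, we obtain $t - \tau_y(v) \le \tfrac{2}{\xi(y)} + \tfrac{\log\xi(y)}{4\xi(y)}$, i.e.\ $v\in \bar L_\theta(y,t)$. Consequently $L^* := \bigcup_{j\in J_2(y)} L_{j,\theta}(y,t)\subseteq \bar L_\theta(y,t)$; and if $J_2(y)\neq\emptyset$ then $\#L^*\ge \xi(y)^{1/2}$ by the definition of $J(y)$. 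The proof of Lemma~\ref{tsmall} only uses the estimate $\Px(N^v(y,t;U_{y,\theta})< 1 \mid \calG_{L_\theta(y,t)})\le 1/2$ for each $v\in \bar L_\theta(y,t)$ followed by a Chernoff bound, so it carries over verbatim to the sub-collection $L^*$: with failure probability at most $e^{-\#L^*/16}\le e^{-\xi(y)^{1/2}/16}$, at least $\#L^*/4$ of the particles in $L^*$ satisfy $N^v\ge 1$. Combining with the deterministic bound $f_\theta(y,t-\tau_y(v)) \le e^{2}\xi(y)^{1/4}$ valid for $v\in\bar L_\theta(y,t)$, we deduce
\[\sum_{j\in J_2(y)}\sum_{v\in L_{j,\theta}(y,t)} N^v(y,t;U_{y,\theta}) \ge \frac{\#L^*}{4} \ge \frac{1}{4e^2\xi(y)^{1/4}}\sum_{v\in L^*} f_\theta(y,t-\tau_y(v)).\]

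Adding the $J_1(y)$ and $J_2(y)$ estimates and using $1/(8e)\ge 1/(4e^2)$ gives the claim, with an overall failure probability of at most $(\xi(y)t + 2) e^{-\xi(y)^{1/2}/16}$, which is absorbed into the stated bound. The main obstacle is really just bookkeeping: matching the decomposition of $J(y)$ to the hypotheses of the two lemmas, and checking that the Chernoff step in Lemma~\ref{tsmall} localises to the subset $L^*$; the probabilistic content is done by the two lemmas themselves.
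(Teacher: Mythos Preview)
Your proof is correct and follows essentially the same approach as the paper: for each $j\in J(y)$, invoke Lemma~\ref{jbulk} when its time hypothesis holds, and apply the argument of Lemma~\ref{tsmall} otherwise, then take a union bound over $j$. The only differences are cosmetic --- you bundle all of $J_2(y)$ into a single Chernoff step rather than treating each $j$ separately (correctly observing that the proof of Lemma~\ref{tsmall} works for any subcollection of $\bar L_\theta(y,t)$ with at least $\xi(y)^{1/2}$ elements), and your final count $(\xi(y)t+2)$ does not literally fit under $\xi(y)t$, though the paper's own bound $\#J(y)\le\xi(y)t$ shares the same slack.
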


\begin{proof}
For any $j$, either $t\ge H(y)+\frac{j+2}{\xi(y)} + \frac{\log \xi(y)}{4\xi(y)}$, in which case we apply Lemma \ref{jbulk}, or $t< H(y) + \frac{j+2}{\xi(y)} + \frac{\log \xi(y)}{4\xi(y)}$, in which case for any $v\in L_{j,\theta}(y,t)$ we have $t<\tau_y(v)+\frac{2}{\xi(y)} + \frac{\log \xi(y)}{4\xi(y)}$ and we apply Lemma \ref{tsmall}. Since $8e<4e^2$, we get
\begin{multline*}
\Px\Big(\exists j\in J(y) : \sum_{v\in L_{j,\theta}(y,t)} N^v(y,t;U_{y,\theta}) \leq \frac{1}{4e^2\xi(y)^{1/4}} \sum_{v\in L_{j,\theta}(y,t)} f_\theta(y,t-\tau_y(v)) \Big| \calG_{L_\theta(y,t)}\Big)\\
\leq \xi(y) t e^{-\frac{1}{16}\xi(y)^{1/2}},
\end{multline*}
and the result follows.
\end{proof}

We now look after those $j$ for which the number of particles absorbed in $I_j(y)$ is small.

\begin{lem}\label{smalllem}
If $y\in \Z^d$ satisfies $\xi(y)\geq 4d$, then
\[\sum_{j\not\in J(y)} \sum_{v\in L_{j,\theta}(y,t)} f_\theta(y,t-\tau_y(v)) \leq
3\xi(y)^{1/2} f_\theta(y,t-H(y)).\]
\end{lem}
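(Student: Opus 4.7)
The plan is to combine the cardinality bound $\#L_{j,\theta}(y,t)<\xi(y)^{1/2}$ that defines $j\notin J(y)$ with a geometric decay estimate on $f_\theta(y,t-\tau_y(v))$ in the index $j$, and then simply sum the resulting geometric series.

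First I would recall that by the many-to-one formula (Lemma \ref{le:mto}), $f_\theta(y,s)=\Ex_y[e^{\int_0^s \xi(X(u))du}\ind_{A(y,s;U_{y,\theta})}]$, so Lemma \ref{tr} (with $z=y$) can be read directly as a statement about $f_\theta$:
\[
f_\theta(y,s)\le e^{-(\xi(y)-2d)(t-s)}\,f_\theta(y,t)\qquad\text{for all }0<s<t.
\]
Applied with $t$ replaced by $t-H(y)$ and $s$ replaced by $t-\tau_y(v)$, this gives, for any $v\in L_{j,\theta}(y,t)$,
\[
f_\theta(y,t-\tau_y(v))\le e^{-(\xi(y)-2d)(\tau_y(v)-H(y))}\,f_\theta(y,t-H(y)).
\]
Since $v\in L_{j,\theta}(y,t)$ forces $\tau_y(v)\ge H(y)+j/\xi(y)$, and since $\xi(y)\ge 4d$ implies $\xi(y)-2d\ge \xi(y)/2$, this bound becomes
\[
f_\theta(y,t-\tau_y(v))\le e^{-j/2}\,f_\theta(y,t-H(y)).
\]

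Next, for each $j\notin J(y)$ the defining condition gives $\#L_{j,\theta}(y,t)<\xi(y)^{1/2}$, so
\[
\sum_{v\in L_{j,\theta}(y,t)}f_\theta(y,t-\tau_y(v))\le \xi(y)^{1/2}e^{-j/2}f_\theta(y,t-H(y)).
\]
Summing over $j\notin J(y)$ and bounding by the full sum over $j\ge 0$,
\[
\sum_{j\notin J(y)}\sum_{v\in L_{j,\theta}(y,t)}f_\theta(y,t-\tau_y(v))\le \xi(y)^{1/2}f_\theta(y,t-H(y))\sum_{j=0}^{\infty}e^{-j/2}=\frac{\xi(y)^{1/2}f_\theta(y,t-H(y))}{1-e^{-1/2}},
\]
and since $1/(1-e^{-1/2})<3$, the lemma follows.

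There is really no obstacle here; the only thing to verify is the identification of $f_\theta$ with the Feynman--Kac functional so that Lemma \ref{tr} may be invoked, and the elementary geometric-series bookkeeping. The condition $\xi(y)\ge 4d$ is used solely to absorb the $2d$ in the exponent, ensuring the decay rate is at least $j/2$.
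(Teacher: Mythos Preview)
Your proof is correct and follows essentially the same route as the paper: apply Lemma~\ref{tr} to get the geometric decay $f_\theta(y,t-\tau_y(v))\le e^{-j/2}f_\theta(y,t-H(y))$ for $v\in L_{j,\theta}(y,t)$ (using $\xi(y)\ge 4d$ to absorb the $2d$), multiply by the cardinality bound $\#L_{j,\theta}(y,t)<\xi(y)^{1/2}$ for $j\notin J(y)$, and sum the geometric series with $1/(1-e^{-1/2})<3$. The only cosmetic difference is that the paper routes the first step through the intermediate quantity $f_\theta(y,t-H(y)-j/\xi(y))$, whereas you go directly via the exponent $(\xi(y)-2d)(\tau_y(v)-H(y))$; these are equivalent.
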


\begin{proof}
Note that by Lemma \ref{tr}, for $v \in L_{j,\theta}(y,t)$,
\[f_\theta(y, t- \tau_y(v)) \leq
f_\theta(y,t-H(y)-\tfrac{j}{\xi(y)}) \leq e^{-(1-\frac{2d}{\xi(y)})j} f_\theta(y,t-H(y))
\leq e^{-j/2} f_\theta(y,t - H(y)) .
\]
Thus, we obtain
\[ \begin{aligned} \sum_{j\not\in J(y)} \sum_{v\in L_{j,\theta}(y,t)} f_\theta(y,t-\tau_y(v))
&  \leq \sum_{j \not\in J(y)}  \# L_{j,\theta}(y,t) e^{-j/2} f_\theta(y,t-H(y)) \\
& \leq \frac{1}{1-e^{-1/2}}
 \xi(y)^{1/2} f_\theta(y,t- H(y)) ,
 \end{aligned}
\]
which implies the result as $\frac{1}{1-e^{-1/2}} \leq 3$.
\end{proof}

We are now in a position to prove Proposition \ref{bigenough}. The strategy is the following. We have shown in Corollary \ref{bigcor} that the number of particles ``coming from $J(y)$'' cannot be much smaller than the expected number of particles ``coming from $J(y)$''. We would also like to say that the number of particles ``not from $J(y)$'' can't be much less than the expected number of particles ``not from $J(y)$''. But this is difficult, because in a ``non-$J(y)$'' time interval we might---for example---have only one particle that just happens to hit $y$ very briefly and then run off before it has time to breed.

We instead show that the number of particles descended from $v_0$, defined to be the first particle to hit $y$,
can't be much less than the expected number of particles "not from $J(y)$". To this end, we have shown that the expected number of particles ``not from $J(y)$'' is not much bigger than the expected number of particles descended from $v_0$ (Lemma \ref{smalllem}); and that the number of particles descended from $v_0$ is not much smaller than the expected number of particles descended from $v_0$ (Lemma \ref{onev}). Putting these elements together gives us a proof.

\begin{proof}[Proof of Proposition \ref{bigenough}]
Suppose first that $t - H(y) \in [0, \frac{1 + \frac 1 4 \log\xi(y)}{\xi(y)}]$, but $\# L_\theta(y,t) \geq \xi(y)^{1/2}$. 
In this case $L_\theta(y,t) = \bar L_\theta(y,t)$ and Proposition~\ref{bigenough} follows from Lemma~\ref{tsmall}.

Therefore we can assume that $t - H(y) \geq \frac{1 + \frac 1 4 \log\xi(y)}{\xi(y)}$. 
We let $v_0$ be the first particle to hit $y$ (at time $H(y)$). We want to show that except on a set of small probability, we have
\[\sum_{v\in L_\theta(y,t)} N(y,t;U_{y,\theta}) \geq \xi(y)^{-4/5} \sum_{v\in L_{\theta}(y,t)} f_\theta(y,t-\tau_y(v)).\]

Since
\[\sum_{v\in L_\theta(y,t)} N^v(y,t;U_{y,\theta}) \geq \frac12 \sum_{j\in J}\sum_{v\in L_{j,\theta}(y,t)} N^v(y,t;U_{y,\theta}) + \frac12 N^{v_0}(y,t;U_{y,\theta}),\]
and by Lemma \ref{smalllem},
\[\sum_{v\in L_\theta(y,t)} f_\theta(y,t-\tau_y(v)) \leq \sum_{j\in J} \sum_{v\in L_{j,\theta}(y,t)} f_\theta(y,t-\tau_y(v)) + 3 \xi(y)^{1/2} f_\theta(y,t-H(y)),\]
it suffices to show that
\begin{multline*}
\frac12 \sum_{j\in J}\sum_{v\in L_{j,\theta}(y,t)} N^v(y,t;U_{y,\theta}) + \frac12 N^{v_0}(y,t;U_{y,\theta})\\
\geq \xi(y)^{-4/5}\sum_{j\in J} \sum_{v\in L_{j,\theta}(y,t)} f_\theta(y,t-\tau_y(v)) + 3 \xi(y)^{-3/10} f_\theta(y,t-H(y)).
\end{multline*}

By Corollary \ref{bigcor} (noting that $8e^2\xi(y)^{1/4} \leq \xi(y)^{4/5}$),
\begin{multline*}
\Px\Big(\frac12 \sum_{j\in J(y)}\sum_{v\in L_{j,\theta}(y,t)} N^v(y,t;U_{y,\theta}) < \xi(y)^{-4/5} \sum_{j\in J(y)} \sum_{v\in L_{j,\theta}(y,t)} f_\theta(y,t-\tau_y(v))\Big|\calG_{L_\theta(y,t)}\Big) \\
\leq \xi(y)te^{-\frac{1}{16}\xi(y)^{1/2}},
\end{multline*}
and by Lemma \ref{onev} (using that $t - H(y) \geq \frac{1 + \frac 1 4 \log\xi(y)}{\xi(y)}$ and $3\xi(y)^{-3/10}\leq \frac{1}{2\xi(y)^{1/4}}$), we get
\[\Px\Big(\frac12 N^{v_0}(y,t;U_{y,\theta}) < 3\xi(y)^{-3/10} f_\theta(y,t-H(y))\Big|\calG_{L_\theta(y,t)}\Big) \leq (3+d)\xi(y)^{-1/16}.\]
The result follows.
\end{proof}

\section{Applying bounds at $w_T(t)$} \label{mainproofsec}

We essentially have everything we need to prove one-point localization, by combining the results from the previous two sections with our previous work from~\cite{OR14}.
We therefore need to recall some of the notation from~\cite{OR14}.
We introduce a rescaling
of time by a parameter $T > 0$. We also rescale space 
and the potential. 
If  $q=\frac{d}{\alpha-d}$, the right scaling factors for the potential, respectively space, 
turn out to be
\[ a(T) = \left(\frac{T}{\log T}\right)^q \quad \mbox{and} \quad r(T) = \left(\frac{T}{\log T}\right)^{q+1} . \]
We now define the rescaled lattice as
\[ L_T = \{ z \in \R^d \, : \, r(T) z \in \Z^d \}, \]
and for $z\in\R^d$, $R\geq 0$ define $L_T(z,R) = L_T\cap B(z,R)$ where $B(z,R)$ is the open ball of radius $R$ about $z$ in $\R^d$. For $z \in L_T$, the rescaled potential is given by 
\[ \xi_T(z) = \frac{\xi(r(T) z)}{a(T)},  \]
and we set $\xi_T(z) = 0$ for $z \in \R^d \setminus L_T$.

For any site $z \in L_T$, we set
\[ H_T(z) = \inf\{ t>0 : N(r(T)z,tT)\ge 1\}\]
and
\[ h_T(z) = \inf_{ \substack{y_0,\ldots,y_n \in L_T:\\ y_0 = z, y_n = 0}} \Bigg( \sum_{j=1}^{n} q\frac{| y_{j-1} - y_{j}|}{\xi_T(y_{j})} \Bigg);\]
we showed in~\cite{OR14} that these two quantities are close in a suitable sense. We also set, for $z\in L_T$ and $t\ge0$,
\[M_T(z,t) = \frac{1}{a(T)T} \log_+ N(r(T)z,tT)\]
and
\[m_T(z,t) = \sup_{y \in L_T}\{\xi_T(y)(t-h_T(y))_+ - q|z-y|\}.\]
Again we showed that these two quantities are close.

In order to apply our results from earlier sections, we need to ensure that several irritating events do not occur. We check, via a sequence of lemmas, that these events have small probability. All these lemmas are either easy to prove, or are restatements of results from \cite{OR14}. First we fix some parameters:
\begin{itemize}
\item $\rho_T = \log\log T$;
\item $\nu_T = \log^{-d/16\alpha} T$;
\item $K_T = \nu_T^{-2\alpha}\rho_T^{2d} = (\log T)^{d/16}(\log\log T)^{2d}$;
\item $\eps_T = \frac{3}{q}r(T)\log^{-1/4} T$.
\item $\theta_T = \nu_T^{2+2\alpha}a(T)$.
\end{itemize}

\begin{lem}
Define
\begin{multline*}
A_T = \{\exists z_1,z_2\in L_T(0,\rho_T) : z_1\neq z_2,\,\, \xi_T(z_1)\geq \nu_T/2,\\
 \xi_T(z_2)\geq \nu_T/2,\,\, |\xi_T(z_1)-\xi_T(z_2)|\leq \nu_T^{2+2\alpha}\}.
 \end{multline*}
Then $\P(A_T)\to 0$ as $T\to\infty$.
\end{lem}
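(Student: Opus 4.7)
The plan is a straightforward union bound over pairs of points in $L_T(0,\rho_T)$, combined with a computation using the Pareto density and the scaling identity $r(T)^d/a(T)^\alpha = 1$.

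First I would count the points: since $L_T = \tfrac{1}{r(T)}\Z^d$, the number of points in $L_T(0,\rho_T)$ is at most $C_d (r(T)\rho_T)^d$ for some dimensional constant, so the number of \emph{unordered pairs} of distinct points is at most $C_d' (r(T)\rho_T)^{2d}$. By independence of $\xi_T(z_1)$ and $\xi_T(z_2)$ for $z_1\neq z_2$, it suffices to estimate, for a single such pair, the probability
\[
p_T := \Prob\bigl(\xi_T(z_1)\ge \nu_T/2,\ \xi_T(z_2)\ge\nu_T/2,\ |\xi_T(z_1)-\xi_T(z_2)|\le \nu_T^{2+2\alpha}\bigr),
\]
and then use the union bound.

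For the single-pair estimate, I condition on $\xi_T(z_1)$. The density of $\xi_T(z_i)=\xi(r(T)z_i)/a(T)$ at $u\ge 1/a(T)$ is $\alpha a(T)^{-\alpha} u^{-\alpha-1}$. For $u\ge \nu_T/2$ (with $T$ large enough that $\nu_T^{2+2\alpha}\ll \nu_T/2$), the mean value theorem gives
\[
\Prob\bigl(\xi_T(z_2)\in[u-\nu_T^{2+2\alpha},u+\nu_T^{2+2\alpha}]\bigr)
\le 2\alpha\, a(T)^{-\alpha}\, \nu_T^{2+2\alpha}\, (u-\nu_T^{2+2\alpha})^{-\alpha-1}
\le C_\alpha\, a(T)^{-\alpha}\, \nu_T^{2+2\alpha}\, u^{-\alpha-1}.
\]
Integrating over $u\ge\nu_T/2$ against the density of $\xi_T(z_1)$ yields
\[
p_T \le C_\alpha'\, a(T)^{-2\alpha}\, \nu_T^{2+2\alpha}\int_{\nu_T/2}^\infty u^{-2\alpha-2}\,du
\le C_\alpha''\, a(T)^{-2\alpha}\, \nu_T^{2+2\alpha}\, \nu_T^{-2\alpha-1}
= C_\alpha''\, a(T)^{-2\alpha}\, \nu_T.
\]

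Now I apply the crucial scaling identity. With $q=d/(\alpha-d)$ we have $d(q+1)-\alpha q = d + q(d-\alpha) = d-d = 0$, hence $r(T)^d/a(T)^\alpha = 1$. Combining everything,
\[
\Prob(A_T)\le C_d'(r(T)\rho_T)^{2d}\cdot p_T
\le C\, \rho_T^{2d}\, \nu_T\bigl(r(T)^d/a(T)^\alpha\bigr)^{2}
= C\,(\log\log T)^{2d}\,\log^{-d/(16\alpha)} T,
\]
which tends to $0$ as $T\to\infty$ since any positive power of $\log T$ dominates any power of $\log\log T$.

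The only nontrivial step is the single-pair estimate, which amounts to a routine computation with the Pareto density once one sees that one should condition on $\xi_T(z_1)$ and estimate the chance that $\xi_T(z_2)$ falls in a tiny window around it. Everything else is bookkeeping and the scaling identity already built into the definitions of $a(T)$ and $r(T)$.
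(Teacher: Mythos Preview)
Your proof is correct and follows essentially the same route as the paper: a union bound over pairs in $L_T(0,\rho_T)$, a single-pair estimate, and the scaling identity $r(T)^d=a(T)^\alpha$ to collapse the final bound to $C\rho_T^{2d}\nu_T\to 0$. The only cosmetic difference is in the single-pair step: the paper observes that $\Prob(\xi_T(z_2)\in[x,x+y])$ is decreasing in $x$ and replaces the random window $[\xi_T(z_1)-\nu_T^{2+2\alpha},\xi_T(z_1)+\nu_T^{2+2\alpha}]$ by the fixed window $[\nu_T/2,\nu_T/2+2\nu_T^{2+2\alpha}]$, which factorizes the probability without any integration, whereas you condition on $\xi_T(z_1)=u$ and integrate the density; both yield the same order $a(T)^{-2\alpha}\nu_T$ for $p_T$.
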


\begin{proof}
Since there are at most $C_d^2 r(T)^{2d}\rho_T^{2d}$ pairs of points in $L_T(0,\rho_T)$, for any pair $z_1\neq z_2$ we have
\begin{multline*}
\P(A_T)\leq C_d^2 r(T)^{2d}\rho_T^{2d} \P(\xi_T(z_1)\geq \nu_T/2,\, \xi_T(z_2)\geq\nu_T/2,\\
\xi_T(z_2)\in[\xi_T(z_1)-\nu_T^{2+2\alpha},\xi_T(z_1)+\nu_T^{2+2\alpha}]).
\end{multline*}
Now, $\xi_T(z_1)$ and $\xi_T(z_2)$ are independent, and $\P(\xi_T(z_2)\in[x,x+y])$ is decreasing in $x$ for fixed $y$, so
\begin{align*}
\P(A_T)&\leq C_d^2 r(T)^{2d}\rho_T^{2d} \P(\xi_T(z_1)\geq \nu_T/2, \xi_T(z_2)\in[\nu_T/2,\nu_T/2+2\nu_T^{2+2\alpha}])\\
&\leq C_d^2 r(T)^{2d}\rho_T^{2d} 2^{2\alpha} a(T)^{-2\alpha}\nu_T^{-2\alpha}(1-(1+2\nu_T^{1+2\alpha})^{-\alpha})\\
&\leq C_d^2 \rho_T^{2d}\nu_T^{-2\alpha}2^{2\alpha+1}\alpha\nu_T^{1+2\alpha} = 2^{2\alpha+1}\alpha C_d^2 \rho_T^{2d}\nu_T
\end{align*}
which tends to $0$ as $T\to\infty$.
\end{proof}

\begin{lem}
Let $Q = \{z\in \Z^d : \xi(z)\leq \nu_T a(T)/2\}$. Then
\[\Px\bigg(\sum_{z\in\Z^d} N(z,tT; Q^c) > e^{\frac34 \nu_T a(T)tT}\bigg)\to 0\]
$\P$-almost surely as $T\to\infty$.
\end{lem}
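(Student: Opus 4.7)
The plan is to apply the many-to-one formula (Lemma~\ref{le:mto}) and then a simple Markov inequality. The key observation is that the particles being counted are exactly those whose ancestral paths stay inside $Q$, so the potential experienced along each relevant trajectory is at most $\nu_T a(T)/2$ by construction.

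More precisely, I would first note that $N(z,tT;Q^c)=0$ whenever $z \in Q^c$, so the sum really ranges over $z \in Q$. By Lemma~\ref{le:mto}, for any fixed $\xi$,
\[
\Ex\Big[\sum_{z\in\Z^d} N(z,tT;Q^c)\Big]
= \Ex\Big[\exp\Big(\int_0^{tT}\xi(X(u))\,du\Big)\,\ind_{\{X(s)\notin Q^c\ \forall s\leq tT\}}\Big].
\]
On the event inside the indicator, the walk stays in $Q$, so $\xi(X(u))\leq \nu_T a(T)/2$ for every $u\leq tT$, and the integrand is bounded by $e^{\nu_T a(T) tT/2}$. Hence
\[
\Ex\Big[\sum_{z\in\Z^d} N(z,tT;Q^c)\Big] \leq e^{\nu_T a(T) tT/2}.
\]
Crucially, this bound is deterministic in $\xi$.

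Now I would apply Markov's inequality, obtaining
\[
\Px\Big(\sum_{z\in\Z^d} N(z,tT;Q^c) > e^{\tfrac{3}{4}\nu_T a(T) tT}\Big)
\leq e^{-\tfrac{1}{4}\nu_T a(T) tT}.
\]
To finish, I would verify that $\nu_T a(T) tT \to \infty$ as $T\to\infty$: since $a(T) = (T/\log T)^q$ with $q = d/(\alpha-d)>0$ and $\nu_T = (\log T)^{-d/(16\alpha)}$ decays only polylogarithmically, we have $\nu_T a(T) \to \infty$, so the right-hand side above tends to $0$ as $T\to\infty$. Because the bound does not depend on the environment, the convergence is in fact deterministic in $\xi$, which is a fortiori $\P$-almost sure.

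There is no real obstacle here; the statement is essentially the standard Feynman--Kac upper bound combined with Markov's inequality, exploiting the fact that constraining the walk to $Q$ kills the exponential growth of the moment. The only thing to check carefully is the quantitative polynomial-versus-polylog comparison $\nu_T a(T) \to \infty$, which uses $\alpha > d$.
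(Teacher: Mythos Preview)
Your proof is correct and follows essentially the same approach as the paper: apply the many-to-one formula to bound the expectation by $e^{\nu_T a(T) tT/2}$ (since the walk is constrained to $Q$), then use Markov's inequality. Your version is in fact more carefully written; the paper's proof records the expectation bound as $e^{\nu_T a(T)/2}$, which appears to be a typo for $e^{\nu_T a(T) tT/2}$, and your argument makes the final step $\nu_T a(T) T \to \infty$ explicit.
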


\begin{proof}
By the many-to-one lemma,
\[\Ex\Big[\sum_{z\in\Z^d} N(z,tT; Q^c)\Big] \leq e^{\nu_T a(T)/2}\]
and the result follows from Markov's inequality.
\end{proof}

\begin{lem}\label{whaslargexi}
\[\P(\exists z_0\in L_T(0,\nu_T) \hbox{ with } \xi_T(z_0)>\nu_T \hbox{ and } m_T(z_0,t)\geq \smfr{9}{10}t\nu_T) \to 1\]
as $T\to\infty$. As a result,
\[\P\bigg(\sum_{z\in\Z^d}N(z,tT) < e^{\frac45 a(T)Tt\nu_T}\bigg)\to 0\]
as $T\to\infty$, and if $w\in L_T$ satisfies $m_T(w,t)\geq m_T(z,t)$ for all $z\in L_T$, then
\[\P( \xi_T(w)\leq \nu_T/2 ) \to 0\]
as $T\to\infty$.
\end{lem}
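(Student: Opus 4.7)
My plan is to prove the three claims in sequence; the first is the substantive one, and the others follow from it with standard arguments.

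\textbf{Claim 1 (existence of a good $z_0$).} The rescaled potential satisfies $\Prob(\xi_T(z)>x)=(a(T)x)^{-\alpha}$, so the expected number of $z\in L_T(0,\nu_T)$ with $\xi_T(z)>\nu_T$ is proportional to $r(T)^d\nu_T^d\cdot(a(T)\nu_T)^{-\alpha}=c_d\nu_T^{d-\alpha}$, using the scaling identity $r(T)^d=a(T)^\alpha$ (equivalent to $d(q+1)=q\alpha$). Since $\alpha>d$ and $\nu_T\to 0$, this diverges; combined with independence of $\{\xi_T(z)\}$, a standard second-moment / Poisson-approximation argument produces such $z_0$ with probability tending to one. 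To upgrade this to $m_T(z_0,t)\geq\tfrac{9}{10}t\nu_T$, I will use the trivial bound $m_T(z_0,t)\geq\xi_T(z_0)(t-h_T(z_0))_+$ and invoke smallness of $h_T$ on shrinking balls around the origin from~\cite{OR14}: specifically $h_T(z)\to 0$ in probability uniformly over $z\in L_T(0,\nu_T)$, so that with $\xi_T(z_0)>\nu_T$ the required lower bound holds for $T$ large.

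\textbf{Claim 2 (total mass).} Given Claim 1, the quantitative comparison of $M_T$ with $m_T$ from~\cite{OR14}, applied at $z_0$, yields with probability tending to one $M_T(z_0,t)\geq m_T(z_0,t)-\tfrac{1}{10}t\nu_T\geq\tfrac{4}{5}t\nu_T$. Unwinding the definition of $M_T$ gives $\sum_{z\in\Z^d}N(z,tT)\geq N(r(T)z_0,tT)\geq\exp\bigl(\tfrac{4}{5}a(T)Tt\nu_T\bigr)$.

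\textbf{Claim 3 ($\xi_T(w)>\nu_T/2$).} Let $w\in L_T$ maximise $m_T(\cdot,t)$ and let $y^*\in L_T$ attain the (bounded) supremum defining $m_T(w,t)$. Plugging $y'=y^*$ into the inner sup defining $m_T(y^*,t)$ gives $m_T(y^*,t)\geq\xi_T(y^*)(t-h_T(y^*))_+=m_T(w,t)+q|w-y^*|$, so by maximality of $w$ we must have $w=y^*$, and hence $m_T(w,t)=\xi_T(w)(t-h_T(w))_+\leq t\xi_T(w)$. On the good event of Claim 1 this yields $\xi_T(w)\geq m_T(w,t)/t\geq\tfrac{9}{10}\nu_T>\nu_T/2$, so the complementary probability is bounded by $\Prob(\text{bad event of Claim 1})\to 0$.

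The main obstacle is the bound on $h_T(z_0)$ in Claim 1. A naive single-edge path from $z_0$ to $0$ costs $q|z_0|/\xi_T(0)$, which is large because $\xi_T(0)\sim 1/a(T)$ for typical $\xi(0)$; the resolution is to route through a near-maximal site $y^*\in L_T(0,\nu_T)$ (where $\xi_T(y^*)\sim\nu_T^{d/\alpha}\gg\nu_T$ by extreme-value considerations) and close with a short step to a lattice neighbour of $0$ of distance $1/r(T)$, making the $\xi_T(0)$ contribution negligible. The relevant uniform-in-$z$ estimates on $h_T$ should be quotable directly from~\cite{OR14}.
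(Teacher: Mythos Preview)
Your proposal is correct and follows essentially the same route as the paper: both arguments cite \cite{OR14} for (a) the existence of a point $z_0\in L_T(0,\nu_T)$ with $\xi_T(z_0)>\nu_T$, (b) the uniform smallness of $h_T$ on $L_T(0,\nu_T)$, and (c) the comparison $M_T\approx m_T$; and both deduce Claim 3 from the identity $m_T(w,t)=\xi_T(w)(t-h_T(w))_+$ at the maximizer. Your justification of that identity in Claim~3 is actually more careful than the paper's, which simply asserts it.

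One caveat: the heuristic in your final paragraph for why $h_T(z_0)$ is small does not work as written. In the path $z_0\to y^*\to y_2\to 0$ with $y_2$ a lattice neighbour of the origin, the \emph{middle} step costs $q|y^*-y_2|/\xi_T(y_2)$; since $y_2$ is a generic site with $\xi_T(y_2)\sim 1/a(T)$ and $|y^*-y_2|\sim\nu_T$, this is of order $\nu_T a(T)\to\infty$. The short final step only tames the $\xi_T(0)$ denominator, not the penultimate one. The actual bound in \cite[Lemma 3.4]{OR14} (which the paper cites, and which you say you will cite) uses a multi-scale construction: a chain of high-potential sites at geometrically shrinking distances from the origin, so that every denominator along the path is large relative to the corresponding step length. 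Since you intend to quote the result rather than reprove it, this does not create a gap in your argument, but the two-step intuition you sketch is not the right picture.
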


\begin{proof}
By \cite[Lemma 2.7(i)]{OR14}, we have
\[\P(\xi_T(z)\leq \nu_T \hs\forall z\in L_T(0,\nu_T)) \leq e^{-c_d \nu_T^{d-\alpha}}\]
which tends to $0$ as $T\to\infty$ since $\nu_T \rightarrow 0$ and $d-\alpha<0$. Therefore, with high probability, there exists some point $z_0\in L_T(0,\nu_T)$ with $\xi_T(z_0)>\nu_T$. But by \cite[Lemma 3.4]{OR14},
\[\P(\exists z\in L_T(0,\nu_T) : h_T(z) > \frac{4q}{1-2^{-1/(2q+2)}}\nu_T^{1/(2q+2)})\to 0,\]
so in particular, with high probability $h_T(z_0)\leq \frac{4q}{1-2^{-1/(2q+2)}}\nu_T^{1/(2q+2)}$ which is less than $t/10$ for large $T$. Since
\[m_T(z_0,t) \geq \xi_T(z_0)(t-h_T(z_0)),\]
the first part of the result follows. 

By \cite[Proposition 5.7]{OR14}, with high probability we have $M_T(z_0,t) \geq \frac{9}{10}t\nu_T  - \log^{-1/4} T$, and since $\nu_T = \log^{-d/16\alpha} T \geq \log^{-1/16}T$, this is at least $\frac45 t \nu_T$ when $T$ is large. Therefore $N(r(T)z_0,tT)\geq e^{\frac45 a(T)T\nu_T t}$ with high probability, and the second part of the lemma follows.

The third part is a consequence of the first part, since if $w$ maximizes $m_T(z,t)$, then $m_T(w,t)=\xi_T(w)(t-h_T(w))$ (so if $\xi_T(w)\leq \nu_T/2$ then $m_T(w,t)\leq \nu_T t/2 < \smfr{9}{10}t\nu_T$).
\end{proof}

Let
\[\kappa_T = \{y\in B(0,\rho_T r(T)) : \xi(y) \geq \nu_T a(T)/2\}.\]

\begin{lem}\label{nottoomanygoodpoints}
\[\P(\exists z\not\in B(0,\rho_T r(T)) : h_T(z)<t \hbox{ or } H_T(z)<t) \to 0\]
and
\[\P(\#\kappa_T > K_T ) \to 0\]
and 
\[ \p ( \exists z \in L_T(0,\rho_T) \, : \, \xi(z) \geq \nu_T^{-1} a(T) ) \ra 0 , \]
as $T\to\infty$.
\end{lem}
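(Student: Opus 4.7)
The plan is to handle the three assertions in reverse order, treating Part~3 and Part~2 as simple counting arguments and Part~1 by appealing to the machinery of \cite{OR14}.

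For the third assertion, I would apply a union bound. The ball $L_T(0,\rho_T)$ contains at most $C_d(r(T)\rho_T)^d$ lattice sites, and for each such $z$ we have $\Prob(\xi(r(T)z)\geq \nu_T^{-1}a(T)) = \nu_T^\alpha a(T)^{-\alpha}$ by the Pareto tail. The key scaling identity is that, with $q=d/(\alpha-d)$, we have $(q+1)d - q\alpha = 0$, so $r(T)^d a(T)^{-\alpha}=1$. Hence the union bound gives
\[\p\big(\exists z\in L_T(0,\rho_T) : \xi(z)\geq \nu_T^{-1}a(T)\big) \leq C_d\rho_T^d \nu_T^\alpha = C_d(\log\log T)^d (\log T)^{-d/16}\to 0.\]

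For the second assertion, I would similarly compute a first moment. The cardinality of $B(0,\rho_T r(T))\cap\Z^d$ is at most $C_d(\rho_T r(T))^d$, and $\Prob(\xi(0)\geq \nu_T a(T)/2) = 2^\alpha \nu_T^{-\alpha}a(T)^{-\alpha}$, so using the same scaling identity,
\[\Eb[\#\kappa_T]\leq C_d 2^\alpha \rho_T^d \nu_T^{-\alpha}.\]
Dividing by $K_T=\nu_T^{-2\alpha}\rho_T^{2d}$, Markov's inequality gives $\p(\#\kappa_T>K_T)\leq C_d 2^\alpha \nu_T^\alpha \rho_T^{-d}\to 0$.

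The first assertion is the main obstacle, since it mixes both the deterministic lilypad quantity $h_T$ and the random hitting time $H_T$. My approach would be to reduce $H_T$ to $h_T$ via the closeness result proved in \cite{OR14} (which shows that $|H_T(z)-h_T(z)|$ is small uniformly in $z\in L_T$ with high probability), so that it suffices to bound the probability that $h_T(z)<t$ for some $z\notin B(0,\rho_T r(T))$. Since $h_T$ is defined as the infimum over paths of $\sum q|y_{j-1}-y_j|/\xi_T(y_j)$, any such path must accumulate a total displacement of order $\rho_T r(T)$ in original coordinates (equivalently, order $\rho_T$ in rescaled units), and the path's cost is controlled from below by this length divided by the supremum of $\xi_T$ along the path. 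Using Part~3 to cap $\sup_{z\in L_T(0,\rho_T)}\xi_T(z)\leq \nu_T^{-1}$ on a high-probability event, and extending this bound to a slightly larger ball using the same union bound argument, we obtain $h_T(z)\geq q\rho_T\nu_T \to \infty$ for all relevant $z$, which exceeds $t$ for $T$ large. In practice this will be done by citing the corresponding hitting-time estimate already established in \cite{OR14} (for example in the proof of the scaling results there), so that the only work here is to combine that citation with Parts 2 and 3 of the present lemma.
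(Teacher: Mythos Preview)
Your arguments for Parts~2 and~3 are correct and essentially reproduce what \cite[Lemma 2.7(ii)]{OR14} gives; the paper's own proof simply cites that lemma, so you are doing the same thing with the details filled in.

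For Part~1, however, your heuristic lower bound is wrong. You claim $h_T(z)\geq q\rho_T\nu_T\to\infty$, but with $\rho_T=\log\log T$ and $\nu_T=(\log T)^{-d/16\alpha}$ we have
\[
\rho_T\nu_T = \frac{\log\log T}{(\log T)^{d/16\alpha}} \to 0,
\]
not $\infty$, so this bound gives nothing. The naive ``path length divided by maximal potential'' estimate is too crude here: capping $\xi_T$ by $\nu_T^{-1}$ throws away the fact that points of potential as large as $\nu_T^{-1}$ are themselves extremely rare and spatially sparse, so a path reaching distance $\rho_T$ cannot hop between such points cheaply. The actual argument in \cite{OR14} (Proposition~4.9 and equation~(7) there) uses a more refined control on the spatial distribution of high-potential sites; this is exactly what the paper cites for this assertion. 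Since you end by saying you would cite \cite{OR14} in practice, your final answer agrees with the paper's, but the sketch you give as justification does not work and should be removed or replaced by the correct reference.
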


\begin{proof}
The first assertion holds by \cite[Proposition 4.9 and (7)]{OR14}. The second and third hold by \cite[Lemma 2.7(ii)]{OR14}.
\end{proof}

\begin{lem}\label{le:2204}
Let $w = w_T(t)$ be any point in $\Z^d$ such that $m_T(w/r(T),t)\geq m_T(z,t)$ for all $z\in L_T$. Then, for any $\eps > 0$
\[\P\bigg(\frac{\sum_{z\in B(w,\eps_T)} N(z,tT)}{\sum_{z\in \Z^d} N(z,tT)} \geq 1 - \eps\bigg) \to 1\]
and
\[\P(\exists z\in B(w,\eps_T )\setminus \{ w\} : \xi(z)\geq \nu_T a(T)/2) \to 0\]
as $T\to\infty$.
\end{lem}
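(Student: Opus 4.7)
The plan is to handle the two statements separately. The first is a concentration statement about the branching mass and should follow from \cite{OR14}; the second is a pure statement about the random environment and follows from a first-moment estimate.

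For the first part, I would invoke the main rescaled localization theorem from \cite{OR14}, which says that after dividing space by $r(T)$, the mass $\sum_z N(z,tT)$ concentrates on an arbitrarily small ball around $w/r(T) = w_T(t)$. The rescaled radius $\eps_T / r(T) = \frac{3}{q} \log^{-1/4} T$ tends to $0$, matching the scale of localization established there. Concretely, the ingredients are the comparison $M_T(z,t) \approx m_T(z,t)$ from \cite[Proposition~5.7]{OR14}, together with an estimate $m_T(z,t) \leq m_T(w,t) - c$ for $|z - w| \geq \eps_T$; summing the resulting exponential bounds on $N(z,tT)$ over such $z$ and dividing by the total mass $\sum_z N(z,tT) \approx e^{a(T) T m_T(w,t)}$ yields a vanishing quantity. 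The technical work lies in extracting the right constant and scale to give exactly the prescribed $\eps_T$.

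For the second part, I use a first-moment estimate. By Lemma \ref{whaslargexi}, $\xi(w) \geq \nu_T a(T)/2$ with probability tending to $1$. Moreover, a short argument using Lemma \ref{nottoomanygoodpoints} (restricting the supremum in the definition of $m_T$ to $L_T(0,\rho_T)$ and using $\xi_T \le \nu_T^{-1}$ there) combined with the lower bound $m_T(w,t) \geq \frac{9}{10}t\nu_T$ shows $w \in B(0, R_T r(T))$ with high probability, where $R_T = 2t\nu_T^{-1}/q$. On this event the target event is contained in
\[E_T := \{\exists y_1 \neq y_2 \in B(0, R_T r(T)) \cap \Z^d : |y_1 - y_2| < \eps_T, \, \xi(y_1), \xi(y_2) \geq \nu_T a(T)/2\}.\]
By independence, the expected number of ordered pairs in $E_T$ is at most
\[C_d^2 (R_T r(T))^d \eps_T^d \cdot 2^{2\alpha} \nu_T^{-2\alpha} a(T)^{-2\alpha}.\]
Using the scaling identity $r(T)^d = a(T)^\alpha$ (which follows from $d(q+1) = q\alpha$) together with the definitions of $\eps_T$ and $\nu_T$, this collapses to a constant multiple of $\log^{d(d-2\alpha)/(16\alpha)} T$, which tends to $0$ because $\alpha > d$. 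Markov's inequality then closes the argument.

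The main obstacle I anticipate is the first part, which requires matching the localization radius in \cite{OR14} to precisely $\eps_T$; the second part is routine bookkeeping, enabled by the fact that the exponents of $\log T$ in $\eps_T$ and $\nu_T$ are engineered so that the first moment shrinks.
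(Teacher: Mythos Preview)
Your approach is correct and matches the paper's in spirit, with two small simplifications available. For the first statement, the paper simply cites \cite[Theorem~1.3]{OR14} as a direct rewording; the radius $\eps_T/r(T)$ already fits the scale of that theorem, so the obstacle you anticipate does not arise and the sketch via $M_T\approx m_T$ is unnecessary. For the second statement, the paper localizes $w$ more tightly: since the maximizer of $m_T(\cdot,t)$ satisfies $m_T(w/r(T),t)=\xi_T(w/r(T))(t-h_T(w/r(T)))$, one has $h_T(w/r(T))<t$ and hence $w\in B(0,\rho_T r(T))$ directly from Lemma~\ref{nottoomanygoodpoints}. This replaces your $R_T=2t\nu_T^{-1}/q$ by the smaller $\rho_T=\log\log T$, giving the cleaner bound $C_d^2(3/q)^d 2^{2\alpha}(\log\log T)^d\log^{-d/8}T$; your version with $R_T$ and exponent $d(d-2\alpha)/(16\alpha)$ also works, just with a slightly worse power of $\log T$.
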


\begin{proof}
The first statement is simply a rewording of \cite[Theorem 1.3]{OR14}. By Lemma \ref{whaslargexi} we may assume that $\xi(w)> \nu_T a(T)/2$, and by the first part of Lemma \ref{nottoomanygoodpoints} we may assume that $w\in B(0,r(T)\rho_T)$. The second statement then follows from the fact that
\begin{align*}
&\P(\exists y\in B(0,r(T)\rho_T) : \xi(y) > \nu_T a(T)/2, \hsl \exists z\in B(y,\eps_T) \hbox{ with } \xi(z) > \nu_T a(T)/2)\\
&\leq C_d r(T)^d \rho_T^d\P(\xi(0) > \nu_T a(T)/2, \hsl \exists z\in B(0,\eps_T) \hbox{ with } \xi(z) > \nu_T a(T)/2)\\
&\leq C_d r(T)^{d} \rho_T^d \cdot C_d \eps_T^d\cdot \P(\xi(0)>\nu_T a(T)/2)\cdot \P(\xi(0)>\nu_T a(T)/2)\\
&= C_d^2 (\smfr{3}{q})^d 2^{2\alpha} (\log\log T)^d \log^{-d/8}T
\end{align*}
which tends to $0$ as $T\to\infty$.
\end{proof}

Recall that
\[\kappa_T = \{y\in B(0,\rho_T r(T)) : \xi(y) \geq \nu_T a(T)/2\}\]
and $\theta_T = \nu_T^{2+2\alpha}a(T)$.
Moreover, we define
\[ \tilde L_\theta(tT) = \Big\{ y \in \kappa_T \,  : \, \#L_\theta(y,tT) \leq \xi(y)^{1/2}, tT - H(y) \in [0, \frac{1 + \tfrac 14 \log \xi(y)}{\xi(y)}] \Big\}, 
\]
as the points in $\kappa_T$ that get hit fairly late for the first time and the first particle is 
not followed by many other particles immediately afterwards.
Contributions from these points will be negligible and we first show how to control the points
in the complement.

\begin{lem}
For any $t\geq 0$,
\[\P(\exists y\in \kappa_T \cap \tilde L_{\theta_T}(tT)^c : N(y,tT; U_{y,\theta_T}) < \xi(y)^{1/10} \sum_{z\neq y} N(z,tT;U_{y,\theta_T},y)) \to 0\]
as $T\to\infty$.
\end{lem}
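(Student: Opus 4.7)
The plan is to combine Propositions \ref{notyprop} and \ref{bigenough} at each $y \in \kappa_T \cap \tilde L_{\theta_T}(tT)^c$ and then union bound over the polylogarithmically many such $y$. By Lemma \ref{nottoomanygoodpoints} we may restrict to the high-probability event on which $\#\kappa_T \leq K_T$ and $\xi(z) \leq \nu_T^{-1} a(T)$ for every $z \in L_T(0,\rho_T)$; in particular, any $y\in\kappa_T$ satisfies $\tfrac12\nu_T a(T) \leq \xi(y) \leq \nu_T^{-1} a(T)$, which is polynomially large in $T$.

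The crucial observation is that the definition of $\tilde L_{\theta_T}(tT)^c$ is tailored precisely to match the alternative hypothesis of Proposition \ref{bigenough}: if $y$ lies in this complement and $H(y)\leq tT$ (otherwise both sides of the inequality in the lemma are zero), then either $\#L_{\theta_T}(y,tT) \geq \xi(y)^{1/2}$ or $tT - H(y) \geq (1+\tfrac14\log\xi(y))/\xi(y)$. The remaining numerical hypotheses of both Propositions \ref{notyprop} and \ref{bigenough} are easily checked for large $T$: $\xi(y)\to\infty$ handles the conditions ``$\xi(y)$ large'' and $(3+d)\xi(y)^{-1/16}\leq 1/2$; for the bound $\theta_T > 10 d \xi(y)^{19/20}$ in Proposition \ref{notyprop}, note that $\theta_T \xi(y)^{-19/20} \geq \nu_T^{2+2\alpha} a(T) \cdot (\nu_T^{-1} a(T))^{-19/20} = \nu_T^{2+2\alpha + 19/20}\, a(T)^{1/20}$, which diverges since $a(T)$ is polynomial in $T$ while $\nu_T^{-1}$ is polylogarithmic.

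Since every particle contributing to $\sum_{z\neq y} N(z,tT;U_{y,\theta_T},y)$ has a unique ancestor that first visited $y$, and that ancestor belongs to $L_{\theta_T}(y,tT)$, we have
\[\sum_{z\neq y} N(z,tT;U_{y,\theta_T},y) = \sum_{v\in L_{\theta_T}(y,tT)} \sum_{z\neq y} N^v(z,tT;U_{y,\theta_T}).\]
Combining the upper bound of Proposition \ref{notyprop} with the lower bound of Proposition \ref{bigenough} gives, except on a set of (conditional on $\calG_{L_{\theta_T}(y,tT)}$, and hence by integration, unconditional) probability at most $C_d \xi(y)^{-1/20} + \xi(y) tT\, e^{-\xi(y)^{1/2}/16}$, the chain of inequalities
\[N(y,tT;U_{y,\theta_T}) \geq \xi(y)^{-4/5} \sum_{v\in L_{\theta_T}(y,tT)} f_{\theta_T}(y,tT-\tau_y(v)) \geq \xi(y)^{1/10} \sum_{z\neq y} N(z,tT;U_{y,\theta_T},y).\]

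A union bound over the at most $K_T$ points of $\kappa_T$ then yields a total probability of at most
\[K_T \Bigl(C_d (\nu_T a(T)/2)^{-1/20} + (\nu_T^{-1} a(T))\, tT \, e^{-(\nu_T a(T)/2)^{1/2}/16}\Bigr),\]
which tends to zero because $a(T)$ is polynomial in $T$ whereas $K_T$ and the factors $\nu_T^{\pm 1}$ are only polylogarithmic. The only real obstacle is the careful verification of the various scale conditions on $\xi(y)$ and $\theta_T$ required by Propositions \ref{notyprop} and \ref{bigenough}; once this bookkeeping is done, the lemma follows directly by combining those two estimates.
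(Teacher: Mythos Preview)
Your proof is correct and follows essentially the same approach as the paper: verify the hypotheses of Propositions \ref{notyprop} and \ref{bigenough} using the bounds $\tfrac12\nu_T a(T)\le\xi(y)\le\nu_T^{-1}a(T)$ from Lemma \ref{nottoomanygoodpoints}, combine the two conditional estimates through the common quantity $\sum_{v\in L_{\theta_T}(y,tT)} f_{\theta_T}(y,tT-\tau_y(v))$ to obtain $N(y,tT;U_{y,\theta_T})\ge\xi(y)^{1/10}\sum_{z\neq y}N(z,tT;U_{y,\theta_T},y)$, and then union bound over the at most $K_T$ points of $\kappa_T$. Your handling of the edge case $H(y)>tT$ and the explicit remark that the conditional bound can be integrated to an unconditional one are slightly more careful than the paper's exposition, but the argument is otherwise identical.
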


\begin{proof}
For any $y\in\kappa_T \cap \tilde L_{\theta_T}(tT)^c$, we have $\xi(y)\geq \nu_T a(T)/2$. Also, 
by Lemma~\ref{nottoomanygoodpoints}, we can assume that $\xi(y) \leq \nu_T^{-1} a(T)$ and so in particular $\theta_T > 10 d \xi(y)^{19/20}$
for $T$ sufficiently large.
Therefore we may apply Propositions \ref{notyprop} and \ref{bigenough} to see that
\begin{multline*}
\Px\bigg(\sum_{v\in L_{\theta_T}(y,tT)} N^v(y,tT;U_{y,\theta_T}) < \xi(y)^{1/10} \sum_{v\in L_{\theta_T}(y,tT)}\sum_{z\neq y} N^v(z,tT;U_{y,\theta_T})
\Big | \mathcal{G}_{L_{\theta_T}(y,t)} \bigg)\\
\leq \frac{2d}{(1-2^{-19/20})^2}\xi(y)^{-1/20} + (3+d)\xi(y)^{-1/16} + \xi(y)tT e^{-\xi(y)^{1/2}/16}.
\end{multline*}
But
\[\sum_{v\in L_{\theta_T}(y,tT)} N^v(y,tT;U_{y,\theta_T}) = N(y,tT;U_{y,\theta_T})\]
and
\[\sum_{v\in L_{\theta_T}(y,tT)}\sum_{z\neq y} N^v(z,tT;U_{y,\theta_T}) = \sum_{z\neq y} N(z,tT;U_{y,\theta_T},y),\]
so
\begin{multline*}
\Px\bigg(N(y,tT;U_{y,\theta_T}) < \xi(y)^{1/10} \sum_{z\neq y} N(z,tT;U_{y,\theta_T},y) \Big | \mathcal{G}_{L_{\theta_T}(y,t)} \bigg)\\
\leq \frac{2d}{(1-2^{-19/20})^2}\xi(y)^{-1/20} + (3+d)\xi(y)^{-1/16} + \xi(y)tT e^{-\xi(y)^{1/2}/16}.
\end{multline*}
By the second part of Lemma \ref{nottoomanygoodpoints}, we may assume that there are at most $K_T$ points in $\kappa_T$, and a union bound gives the result.
\end{proof}

Finally, we can control the points in $\tilde L_{\theta_T}(tT)$ that only get hit by a few particles that do not have much 
time to grow.

\begin{lem}
\[ \P \bigg( \exists y \in \tilde L_{\theta_T}(tT)  \, : \, 
N(y,tT, U_{y,\theta_T}) \geq \nu(T)^{-1} a(T) \bigg) \ra 0 . \]
\end{lem}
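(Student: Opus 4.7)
The plan is to apply Markov's inequality site-by-site, after a union bound over $y \in \kappa_T$. The idea is that the two conditions defining $\tilde L_{\theta_T}(tT)$---namely, few first-arriving particles, and no time for their descendants to multiply---each give a small factor, and together they show that the expected population at $y$ is only polynomial in $\xi(y)$ with exponent strictly less than $1$. Since $\xi(y) \leq \nu_T^{-1} a(T)$ on a high-probability event, dividing by the threshold $\nu_T^{-1} a(T)$ produces a vanishing quantity even after the union bound.

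Concretely, I would first decompose
\[
N(y, tT; U_{y,\theta_T}) \;=\; \sum_{v \in L_{\theta_T}(y,tT)} N^v(y, tT; U_{y,\theta_T})
\]
(the same decomposition used in the proof of the previous lemma), and then apply the branching/strong Markov property at each hitting time $\tau_y(v)$ together with the many-to-one formula to obtain
\[
\Ex[N(y, tT; U_{y,\theta_T}) \mid \calG_{L_{\theta_T}(y,tT)}] \;=\; \sum_{v \in L_{\theta_T}(y,tT)} f_{\theta_T}(y, tT - \tau_y(v)).
\]
The trivial bound $f_{\theta_T}(y, s) \leq e^{\xi(y) s}$, which is immediate from the many-to-one formula since $\xi(X(u)) \leq \xi(y)$ on the event $A(y, s; U_{y,\theta_T})$, combined with the two constraints $\#L_{\theta_T}(y,tT) \leq \xi(y)^{1/2}$ and $tT - \tau_y(v) \leq tT - H(y) \leq (1 + \tfrac{1}{4}\log \xi(y))/\xi(y)$ built into the definition of $\tilde L_{\theta_T}(tT)$, then yields the conditional expectation bound
\[
\Ex[N(y, tT; U_{y,\theta_T}) \mid \calG_{L_{\theta_T}(y,tT)}] \;\leq\; \xi(y)^{1/2} \cdot e\, \xi(y)^{1/4} \;=\; e\, \xi(y)^{3/4}.
\]

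To conclude, I would invoke Lemma \ref{nottoomanygoodpoints} to restrict to the high-probability event on which $\xi(y) \leq \nu_T^{-1} a(T)$ for every $y \in L_T(0,\rho_T) \supseteq \kappa_T$ and $\#\kappa_T \leq K_T$. Conditional Markov then gives
\[
\Px\big(N(y, tT; U_{y,\theta_T}) \geq \nu_T^{-1} a(T)\big) \;\leq\; \frac{e\, \xi(y)^{3/4}}{\nu_T^{-1} a(T)} \;\leq\; e\,(\nu_T^{-1} a(T))^{-1/4},
\]
and a union bound over $\kappa_T$ produces a total probability of at most $e\,K_T (\nu_T^{-1} a(T))^{-1/4}$, which vanishes as $T\to\infty$ because $K_T$ and $\nu_T^{-1}$ grow only polylogarithmically in $T$ while $a(T)^{1/4}$ is a positive power of $T$. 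There is no real obstacle: the whole argument is careful bookkeeping with Markov's inequality and the two constraints defining $\tilde L_{\theta_T}(tT)$, with the only mildly subtle point being the verification of the conditional-expectation identity above, which however is essentially the same computation that was already carried out in the proof of Proposition \ref{notyprop}.
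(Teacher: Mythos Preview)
Your proposal is correct and follows essentially the same approach as the paper: decompose $N(y,tT;U_{y,\theta_T})$ over $L_{\theta_T}(y,tT)$, bound the conditional expectation by $e\,\xi(y)^{3/4}$ using the two defining constraints of $\tilde L_{\theta_T}(tT)$, apply Markov's inequality, and finish with a union bound over $\kappa_T$ via Lemma~\ref{nottoomanygoodpoints}. The only cosmetic difference is that the paper applies Markov with threshold $\xi(y)$ and then uses the lower bound $\xi(y)\ge \nu_T a(T)/2$, whereas you use threshold $\nu_T^{-1}a(T)$ directly together with the upper bound $\xi(y)\le \nu_T^{-1}a(T)$; both yield a bound of order $a(T)^{-1/4}$ times polylogarithmic factors.
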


\begin{proof} Let   $y \in \tilde L_{\theta_T}(tT)$. We recall that then
$\xi(y) \geq \nu_T a(T) /2$, $t - H(y) \in [0,\frac{1 + \tfrac 14 \log \xi(y)}{\xi(y)}]$ and 
$\# L_{\theta_T}(y,tT) \leq \xi(y)^{1/2}$. 
Note that by Markov's inequality
\[ \begin{aligned} \Px \bigg( \sum_{v \in L_{\theta_T}(y,tT)} &  N^v (y,tT, U_{y,\theta_T}) \geq \xi(y) \, \Big| \, \mathcal{G}_{L_{\theta_T}(y,tT)} \bigg)\\
&  \leq \frac{1}{\xi(y)} \sum_{v \in L_{\theta_T}(y,tT)} \E [ N(y,s, U_{y,\theta_T} )]|_{s = t T - \tau_y(v) } \\
&  \leq \frac{1}{\xi(y)} \# L_{\theta_T}(y,tT) f(y, tT - H(y) ) \\
& \leq \frac{1}{\xi(y)} \xi(y)^{1/2}e^{\xi(y) \frac{1}{\xi(y)} (1 + \tfrac 14 \log \xi(y)) }
\leq \frac{e}{\xi(y)^{1/4}} 
\leq \frac{6}{\nu_T^{1/4} a(T)^{1/4}}. 
\end{aligned}
\]
By Lemma~\ref{nottoomanygoodpoints} we can assume that $\xi(y) \leq \nu_T^{-1} a(T)$
and that there  at most $K_T$ points in $\kappa_T$, 
so that a union bound gives the result.
\end{proof}

\begin{proof}[Proof of Theorem \ref{oneptthm}]
Let $\eps > 0$ and let $w = w_T(t)$ be any point in $\Z^d$ such that $m_T(w/r(T),t)\geq m_T(z,t)$ for all $z\in L_T$. Then by the previous lemmas, with high probability we know that the following events occur:
\begin{enumerate}[(i)]
\item There do not exist $z_1,z_2\in L_T(0,\rho_T)$ such that $z_1\neq z_2$, $\xi_T(z_1)\geq \nu_T/2$, $\xi_T(z_2)\geq \nu_T/2$ and $|\xi_T(z_1)-\xi_T(z_2)|\leq \nu_T^{2+2\alpha}$;
\item $\sum_{z\in\Z^d} N(z,tT; Q^c) \leq \frac 12 e^{\frac34 \nu_T a(T)tT}$, where $Q = \{z\in \Z^d : \xi(z)\leq \nu_T a(T)/2\}$;
\item For all $z\not\in B(0,\rho_T r(T))$, both $h_T(z)\geq t$ and $H_T(z)\geq t$;
\item $\sum_{z\in\Z^d} N(z,tT) \geq  e^{\frac45 a(T)Tt\nu_T}$;
\item $(\sum_{z\in B(w,\eps_T )} N(z,tT))/(\sum_{z\in \Z^d} N(z,tT)) >  1  - \eps/2$;
\item For all $z\in B(w,\eps_T ) \setminus \{ w\}$, we have $\xi(z)< \nu_T a(T)/2$;
\item For all $y\in\kappa_T \cap \tilde L_{\theta_T}(tT)^c$, we have $N(y,tT; U_{y,\theta_T}) \geq \xi(y)^{\frac{1}{10}} \sum_{z\neq y} N(z,tT;U_{y,\theta_T},y)$.
\item For all $y\in \tilde L_{\theta_T}(tT)$, we have $N(y,tT; U_{y,\theta_T}) \leq \nu_T^{-1} a(T)$.
\end{enumerate}

For $y\in\Z^d$, let
\[\tilde U_y = \{z\in B(0,r(T)\rho_T) : \xi(z)\leq \xi(y)\}.\]
Note that by (iii), any particle that is present at time $tT$ must either have remained within $Q$ or must have travelled via $y$ without exiting $\tilde U_y$ for some $y\in B(0,\rho_T r(T))$ with $\xi(y)\geq \nu_T a(T)/2$. By (i) and (iii), such a particle must in fact not have hit $U_{y,\theta}$. Thus for any $z\in\Z^d$,
\[N(z,tT) \leq N(z,tT;Q^c) + \sum_{y\in \kappa_T} N(z,tT ; U_{y,\theta},y).\]
By (ii), (vi), (vii) and (viii), we get
\[\sum_{z\in B(w,\eps_T )\setminus\{w\}} \hspace{-6mm} N(z,tT) \leq \frac 12 e^{\frac34 \nu_T a(T)tT} + C_d \eps_T^d\nu_T^{-1} a(T) + \Big(\frac{\nu_T a(T)}{2}\Big)^{-1/10}\sum_{y\in \kappa_T} N(y,tT; U_{y,\theta_T}).\]
But clearly
\[\sum_{y\in \kappa_T} N(y,tT; U_{y,\theta_T}) \leq \sum_{z\in\Z^d} N(z,tT),\]
and by (iv), for $T$ sufficiently large,
\[e^{\frac34 \nu_T a(T)tT} + C_d \eps_T^d \nu_T^{-1} a(T) \leq e^{-\nu_T a(T)tT/20} \sum_{z\in\Z^d} N(z,tT),\]
so
\[\sum_{z\in B(w,\eps_T)\setminus\{w\}} N(z,tT) \leq \left(e^{-\nu_T a(T)tT/20} + \Big(\frac{\nu_T a(T)}{2}\Big)^{-1/10}\right) \sum_{z\in \Z^d} N(z,tT).\]
Thus, for $T$ sufficiently large
\[\frac{\sum_{z\in B(w,\eps_T)\setminus\{w\}} N(z,tT)}{\sum_{z\in\Z^d} N(z,tT)} < \eps /2,\]
and combining this with (v),
\[\frac{\sum_{z\in \Z^d\setminus\{w\}} N(z,tT)}{\sum_{z\in\Z^d} N(z,tT)} < \eps . \]
Therefore we have shown that for any large $T$, with high probability, the site $w_T(t)$ satisfies
\[N(w_T(t),tT)/N(tT) > 1-\eps.\]
In particular taking $t=1$ completes the proof of Theorem \ref{oneptthm}.
\end{proof}

%%%%%%%%%%%%%%%%%%%%%%%%%%%%%%%%%%%%%%%%%%%%%%%%%%%%%%%%%%%%%%%%%%%%%%%%%
%							Bibliography 								%
%%%%%%%%%%%%%%%%%%%%%%%%%%%%%%%%%%%%%%%%%%%%%%%%%%%%%%%%%%%%%%%%%%%%%%%%%

\bibliographystyle{alpha}

\end{document}